\documentclass[a4paper,12pt]{amsart}
\usepackage[a4paper,margin=1.2in]{geometry}
\usepackage{amsmath,amsthm,amssymb,mathrsfs}
\usepackage{mathtools}
\usepackage{enumitem}
\usepackage{hyperref}
\usepackage{tikz}
\usetikzlibrary{arrows.meta}
\usepackage[expansion=false,nopatch=footnote]{microtype}

\hypersetup{
  colorlinks=false,
  linkcolor=black,
  citecolor=black,
  urlcolor=black,
  pdftitle={Two characters on one punctured Riemann surface},
  pdfauthor={Pradip Kumar},
}

\newcommand{\C}{\mathbb{C}}
\newcommand{\R}{\mathbb{R}}
\newcommand{\Z}{\mathbb{Z}}

\DeclareMathOperator{\Ext}{Ext}

\newcommand{\Hom}{\operatorname{Hom}}
\newcommand{\Per}{\operatorname{Per}}

\newcommand{\id}{\mathrm{id}}

\newcommand{\Mgn}{\mathcal{M}_{g,n}}
\newcommand{\Om}{\Omega}

\theoremstyle{plain}
\newtheorem{theorem}{Theorem}[section]
\newtheorem{proposition}[theorem]{Proposition}
\newtheorem{lemma}[theorem]{Lemma}
\newtheorem{corollary}[theorem]{Corollary}

\theoremstyle{definition}
\newtheorem{definition}[theorem]{Definition}

\newtheorem{Hypotheses}[theorem]{Hypotheses}

\title[Two characters on one surface]{Two characters on one punctured Riemann surface}

\author{Pradip Kumar}
\address{Department of Mathematics, Shiv Nadar University, Delhi NCR, India}
\email{pradip.kumar@snu.edu.in}
\date{}
\subjclass[2020]{57M50, 30F30, 32G15, 30F60}
\keywords{meromorphic differentials, period map, punctured Riemann surfaces, extremal length, Teichm\"uller theory, minimal surfaces}

\begin{document}

\begin{abstract}
We develop an abstract framework for coupled period--realization of meromorphic $1$--forms on punctured Riemann surfaces.
A configuration datum $C$ gives the combinatorics and determines a restricted character domain
$\Delta_C\subset\Hom(\Gamma_{g,n},\C)^2$ with a scale--fixed slice $\Delta_C^{\mathrm{sc}}$. 
Assuming Teichm\"uller--regularity, degeneration detection, and pushability, we prove that there is point in $\Delta_C^{\mathrm{sc}}$  which corresponds to a surface carrying two meromorphic differentials realizing any prescribed restricted pair.
This abstracts the Weber--Wolf extremal--length minimization method \cite{WeberWolfGAFA,WeberWolfAnnals}.
\end{abstract}

\maketitle

\tableofcontents
\section{Introduction}
Fix integers $g\ge 0$ and $n\ge 1$.
Let $S_{g,\, n}$ be an oriented topological surface of genus $g$ with $n$ punctures, and set
\[
\Gamma_{g,\, n}:=\mathrm{H}_1(S_{g,\, n};\Z). 
\]
When $n=0$ we write $S_g$ and $\Gamma_g:=\mathrm{H}_1(S_g;\Z)$. A \emph{character} is a group homomorphism
$\chi:\Gamma_{g,\, n}\longrightarrow\C$.

A (marked) punctured Riemann surface of type $(g , \, n)$ is a pair $(X, \, P)$ consisting of a compact Riemann surface $X$ of genus
$g$ and an ordered set $P=\{p_1,\, \dots,\, p_n\}\subset X$ of distinct points, together with a marking
$S_{g,\, n}\cong X\setminus P$.
We write $\Mgn$ for the corresponding moduli space of marked punctured Riemann surfaces.

For such $(X,\, P)$ we denote by $\Om(X)$ the vector space of meromorphic $1$--forms on $X$ that are holomorphic on
$X\setminus P$.
Any $\omega\in\Om(X)$ has a \emph{period character}
\[
\chi_\omega:\Gamma_{g,\, n}\longrightarrow \C,\qquad \gamma\longmapsto \int_\gamma \omega,
\]
where $\gamma$ is viewed on $X\setminus P$ via the marking.
Let $\Omega\mathcal M_{g,\, n}$ denote the total space of the bundle of such differentials over $\Mgn$; the \emph{period map}
is
\[
\Per:\Omega\mathcal M_{g,\, n}\longrightarrow \Hom(\Gamma_{g,\, n},\, \C),\qquad (X,\, \omega)\longmapsto \chi_\omega.
\]

\subsection{Realization of a single character}
Let $X$ be a compact Riemann surface of genus $g\ge 1$.
Any holomorphic $1$--form $\omega\in H^0(X,\, K_X)$ determines its period character on $\Gamma_g$.
One may ask which characters occur as $\chi_\omega$ (allowing $X$ to vary in moduli).
When $g\ge 2$, the answer is governed by two intrinsic obstructions discovered by Haupt.
To state them, fix a symplectic basis $\{\alpha_1,\, \beta_1,\, \dots,\, \alpha_g,\, \beta_g\}$ of $\Gamma_g$ and define
\[
\mathrm{vol}(\chi)\ :=\ \sum_{i=1}^g \mathrm{Im}\!\bigl(\chi(\alpha_i)\, \, \overline{\chi(\beta_i)}\bigr),
\qquad \chi\in\Hom(\Gamma_g,\, \C).
\]
\begin{theorem}[Haupt {\cite{Haupt}}]\label{thm:intro-haupt}
Assume $g\ge 2$.  A character $\chi\in\Hom(\Gamma_g,\, \C)$ lies in the image of the period map
\[
\Per:\ \Omega\mathcal{M}_g \longrightarrow \Hom(\Gamma_g,\, \C)
\]
if and only if
\begin{enumerate}[label=\textnormal{(\roman*)}, itemsep=2pt, topsep=2pt]
\item $\mathrm{vol}(\chi)>0$, and
\item if $\chi(\Gamma_g)=\Lambda\subset\C$ is a lattice, then
\[
\mathrm{vol}(\chi)\ \ge\ 2\,  \, \mathrm{Area}(\C/\Lambda).
\]
\end{enumerate}
For $g=1$, a character $\chi\in\Hom(\Gamma_1,\, \C)$ arises from a holomorphic abelian differential if and only if
$\chi(\Gamma_1)$ is a rank--$2$ lattice and $\mathrm{vol}(\chi)>0$.
\end{theorem}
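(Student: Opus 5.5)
Necessity and sufficiency are handled separately, with sufficiency carrying most of the work.

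\emph{Necessity.} Let $\omega\neq 0$ be holomorphic on $X$ with $\chi=\chi_\omega$. By the Riemann bilinear relations,
\[
\mathrm{vol}(\chi)=\tfrac{i}{2}\int_X\omega\wedge\bar\omega=\|\omega\|^2>0,
\]
with the sign fixed by the orientation convention, which gives (i).

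For (ii), suppose $\chi(\Gamma_g)=\Lambda$ is a lattice. The Abel--Jacobi type map $X\to\C/\Lambda$, $x\mapsto\int_{x_0}^x\omega$, is then well defined and holomorphic. It is nonconstant, so it is a branched cover of some degree $d\ge 1$, and pulling back the flat area gives $\mathrm{vol}(\chi)=d\cdot\mathrm{Area}(\C/\Lambda)$.

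If $d=1$, the map would be a biholomorphism $X\cong\C/\Lambda$, which is impossible since $g\ge2$. Hence $d\ge2$. The same argument for $g=1$ shows that $\chi(\Gamma_1)$ is the period lattice, which has rank $2$ and positive covolume. Conversely, any such lattice $\Lambda$ is realized by $\C/\Lambda$ with $dz$.

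\emph{Sufficiency.} Here I would follow Haupt's strategy, in the modern form of Kapovich's proof via translation surfaces. The steps are as follows.

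\begin{enumerate}[label=(\arabic*)]
\item Use the action of $\mathrm{Sp}(2g,\Z)$ (change of marking) on characters to put $\chi$ into a normal form. Taking a suitable symplectic basis, one can arrange that $\chi(\alpha_1),\chi(\beta_1)$ span a positively oriented parallelogram with positive area. The remaining handles $(\chi(\alpha_i),\chi(\beta_i))$ then either also have positive area, or have individually small or degenerate area while the total stays positive. This reduction is a number-theoretic/dynamical step on the $\mathrm{Sp}(2g,\Z)$-orbit, with a separate case distinction according to whether $\chi(\Gamma_g)$ is discrete (a lattice or of rank $\le 2$) or dense.
\item Realize each positive-area handle as a flat torus with a slit cut along a common vector $v$. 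Glue the tori cyclically along their slits to obtain a genus-$g$ translation surface whose $dz$ has periods $\chi(\alpha_i),\chi(\beta_i)$, using the fact that the slit-gluing cycles can be chosen to have zero period.
\item Treat handles of nonpositive area by surgery: cut a parallelogram into a positive handle and glue in a ``negative'' handle realized as a cylinder with twist. This is possible as long as enough total area remains to embed the required slits.
\end{enumerate}

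The main obstacle is step (1) together with the slit-embedding condition in (2)--(3). One needs a slit vector $v$ such that each torus piece contains an embedded segment of holonomy $v$. This is exactly where the lattice case with $\mathrm{vol}=\mathrm{Area}(\C/\Lambda)$ must fail, and where the bound $\ge 2\,\mathrm{Area}$ is needed to produce a degree-$\ge2$ branched cover of $\C/\Lambda$ instead. In the lattice case I would construct $X$ directly as a branched double (or degree-$d$) cover of $\C/\Lambda$ with prescribed monodromy, matching $\chi$ up to $\mathrm{Sp}(2g,\Z)$. In the non-lattice case I would use density of $\chi(\Gamma_g)$ to choose $v$ arbitrarily short, which makes the embedding automatic.
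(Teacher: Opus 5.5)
The paper gives no proof of this statement: Theorem~\ref{thm:intro-haupt} is quoted from Haupt as background, so your proposal can only be measured against the standard proofs.

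Your necessity argument is correct. The bilinear relations give (i), and the degree of $X\to\C/\Lambda$ gives (ii) and the genus-one case. Your caveat about orientation is genuinely needed: with the usual convention $\alpha_i\cdot\beta_i=1$ the bilinear relations give $\tfrac{i}{2}\int_X\omega\wedge\bar\omega=\sum_i\mathrm{Im}\bigl(\overline{\chi(\alpha_i)}\,\chi(\beta_i)\bigr)$, which is the negative of the paper's formula.

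The sufficiency direction has a real gap. Step (1) is not a preliminary normalization; it is the theorem. The non-lattice case requires the following: if $\mathrm{vol}(\chi)>0$ and $\chi(\Gamma_g)$ is not a lattice, then some symplectic basis has every handle of positive volume. This is exactly where Kapovich's proof invokes Ratner's orbit-closure theorem for $\mathrm{Sp}(2g,\Z)$ acting on $\Hom(\Gamma_g,\C)$, and you supply no mechanism for it. Your case split ``discrete or dense'' also omits $\overline{\chi(\Gamma_g)}=\R v+\Z w$ (a family of parallel lines), which is neither.

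Once all handles are positive, step (2) finishes immediately. Step (3) cannot stand in for step (1): a slit torus or twisted cylinder glued in contributes a handle whose volume equals its positive area. So negative handles must be removed by the change of basis, not by surgery.

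You have also misplaced the obstruction. The slit holonomy $v$ is a relative period joining the two cone points of angle $2\pi g$, not an element of $\chi(\Gamma_g)$. Any segment shorter than the systoles of all the tori embeds, so density of $\chi(\Gamma_g)$ plays no role and the slit embedding never fails.

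Condition (ii) enters through step (1) instead. If $\chi(\Gamma_g)=\Lambda$, every handle of nonzero volume spans a sublattice of $\Lambda$, so its volume is a nonzero integer multiple of $\mathrm{Area}(\C/\Lambda)$. An all-positive basis therefore forces $\mathrm{vol}(\chi)\ge g\,\mathrm{Area}(\C/\Lambda)$. Hence for $\mathrm{vol}(\chi)=d\,\mathrm{Area}(\C/\Lambda)$ with $2\le d<g$ the glued-tori construction is impossible, not only for $d=1$.

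The branched-cover route is therefore mandatory, and ``matching $\chi$ up to $\mathrm{Sp}(2g,\Z)$'' hides two facts you must prove:
\begin{enumerate}
\item For every $d\ge 2$ there is a genus-$g$, degree-$d$ cover $f:X\to\C/\Lambda$ with $f_*:H_1(X;\Z)\to\Lambda$ surjective. A cover factoring through an isogeny only realizes a proper sublattice.
\item All such characters lie in a single orbit. Write $\chi(\gamma)=(x\cdot\gamma)\,e_1+(y\cdot\gamma)\,e_2$ for a basis $e_1,e_2$ of $\Lambda$. Surjectivity means $\Z x+\Z y$ is a rank-two direct summand of $\Gamma_g$. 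One then checks by elementary symplectic moves that every such pair with a given value $x\cdot y=d$ is $\mathrm{Sp}(2g,\Z)$-equivalent to $(\alpha_1,\,d\,\beta_1+\alpha_2)$.
\end{enumerate}
Only with both facts, plus surjectivity of the mapping class group onto $\mathrm{Sp}(2g,\Z)$ to adjust the marking, does the lattice case close.
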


For punctured surfaces $X\setminus P$ the relevant objects are meromorphic differentials holomorphic on $X\setminus P$.
In this noncompact setting, one gains substantial flexibility after allowing the surface to vary in $\Mgn$:

\begin{theorem}[Chenakkod--Faraco--Gupta {\cite[Theorem~A]{CFG}}]\label{thm:intro-cfg}
If $n\ge 1$, the period map
\[
\Per:\ \Omega\mathcal{M}_{g,\, n}\longrightarrow \Hom(\Gamma_{g,\, n},\, \C)
\]
is surjective.  Equivalently, every $\chi\in\Hom(\Gamma_{g,\, n},\, \C)$ is the period character of a meromorphic differential on
some punctured Riemann surface $X\in\mathcal{M}_{g,\, n}$.
\end{theorem}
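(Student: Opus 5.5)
Since Theorem~\ref{thm:intro-cfg} is quoted from \cite{CFG}, what follows is a sketch of how one would prove surjectivity directly, not a reconstruction of the paper's argument. The approach is \emph{constructive}: build translation surfaces with poles by cutting and gluing flat pieces. Fix $\chi\in\Hom(\Gamma_{g,n},\C)$ and choose a basis of $\Gamma_{g,n}$ of the form $\{\alpha_1,\beta_1,\dots,\alpha_g,\beta_g,c_1,\dots,c_{n-1}\}$, where $c_j$ is a small loop around $p_j$ and $\sum_j c_j=0$. The character is then determined by the absolute periods $\chi(\alpha_i),\chi(\beta_i)$ and the residues $r_j=\chi(c_j)/2\pi i$, with the constraint $\sum_j r_j=0$. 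The goal is a pair $(X,\omega)$ with $\omega$ meromorphic, holomorphic off $P$, realizing these numbers.

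\textbf{Step 1 (the puncture part).} First realize the residues in genus $0$. If not all $r_j$ vanish, take the differential $\omega_0=\sum_j r_j\,dz/(z-z_j)$ on $\mathbb{P}^1$, with the $z_j$ distinct. It has simple poles with residues $r_j$; the term at $\infty$ vanishes because $\sum_j r_j=0$. If all $r_j=0$, including the case $n=1$, use instead a differential with a pole of order $\ge2$, such as $dz$, which has a double pole at $\infty$. The resulting flat picture is a union of half-infinite cylinders and planar ends. Its key feature is that it contains regions of infinite area, so geodesic segments of \emph{any} holonomy $v\in\C$ can be embedded in them.

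\textbf{Step 2 (the genus).} Add the $g$ handles one at a time by a slit construction, in the style of bubbling a handle. For each $i$, if $\chi(\alpha_i)$ and $\chi(\beta_i)$ are $\R$-linearly independent with positive orientation, glue in a flat torus (parallelogram) of sides $\chi(\alpha_i)$ and $\chi(\beta_i)$ along a slit. If they are degenerate or negatively oriented, the torus construction is unavailable. In that case cut two parallel slits of holonomy $\chi(\alpha_i)$ in a planar end, sufficiently far apart, and glue crosswise. This creates a handle whose $\alpha_i$-period is $\chi(\alpha_i)$ and whose $\beta_i$-period is the displacement between the slits. Because the end is an infinite-area plane, that displacement can be chosen to be any prescribed vector, including $\chi(\beta_i)$ and $0$. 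Degenerate cases such as $\chi(\alpha_i)=0$ are handled by first changing the symplectic basis via an element of $\mathrm{Sp}(2g,\Z)$, or by pinching so that a nonzero period appears. Each of these surgeries leaves the residues unchanged, and the gluings can be made so that the new cycles form a symplectic basis compatible with the marking.

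\textbf{Main obstacle.} The hardest step is the case where all residues vanish and the periods are non-generic, in particular $\chi\equiv0$ on the absolute part, or rational, or lattice-valued. These are exactly the cases where Haupt's obstructions (Theorem~\ref{thm:intro-haupt}) bite in the compact setting. The plan there is to use higher-order poles: the planar ends of $dz$-type poles have infinite area, so the volume inequality no longer constrains anything. Concretely, one realizes $\chi$ by gluing slits inside a single copy of $(\C,dz)$; for example, the zero character comes from self-gluing pairs of slits with zero translation. The delicate part is checking that every slit configuration is embedded and non-overlapping, and that the resulting marking agrees with the one prescribed on $S_{g,n}$.
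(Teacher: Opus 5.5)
The paper gives no proof of this statement; it is quoted from Chenakkod--Faraco--Gupta. Your sketch therefore has to stand on its own, and it fails exactly in the non-generic cases it is meant to handle.

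\textbf{The handle surgery.} The positive-volume step (connected sum with a flat torus along a slit) is fine. The problem is the crosswise gluing of two parallel slits of holonomy $v$ at displacement $w$: it does \emph{not} create a handle with absolute periods $(v,w)$. After gluing, the slit edges become two saddle connections $\sigma_a,\sigma_b$. Both run from one new cone point $P$ to another new cone point $Q\neq P$, each with holonomy $v$, so $v$ is only a \emph{relative} period. The new absolute classes are $\sigma_a-\sigma_b$, with period $0$, and the loop passing through the slits, with period $\pm w$.

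You can check this quickly. Performing the surgery on a compact flat torus preserves area, so the new handle contributes nothing to the Riemann bilinear relation. A handle with periods $(v,w)$ would instead contribute $\pm\mathrm{Im}(\overline{v}\,w)\neq 0$.

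Consequences for your Step~2:
\begin{enumerate}
\item These handles have cyclic period group $\Z w$, so negatively oriented pairs and irrationally collinear pairs $(\chi(\alpha_i),\chi(\beta_i))$ are never realized. Already for $g=n=1$, a negatively oriented pair is out of reach.
\item The displacement cannot be $0$ either. Distinct slits in one sheet are glued by their nonzero displacement, and a ``zero translation'' merely reglues a slit to itself.
\end{enumerate}
A negatively oriented handle needs a different surgery. For example, delete from a planar end the parallelogram spanned by $\chi(\alpha_i),\chi(\beta_i)$ and identify opposite sides; the handle then contributes minus the area of that parallelogram. Collinear pairs need yet another gluing.

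\textbf{The zero character and Step~1.} Your ``main obstacle'' paragraph also fails as stated. If $\chi=0$ and $g\ge 1$, then $\omega=df$ with $f$ meromorphic. A surface built from a single copy of $(\C,dz)$ has exactly one pole, of order $2$. So $f$ has a single simple pole, has degree $1$, and $X$ has genus $0$. The zero character needs at least two sheets, i.e.\ a branched cover of degree $\ge 2$; an example is $d\wp$ on an elliptic curve, which has a triple pole. Also, no $\mathrm{Sp}(2g,\Z)$ change of basis produces a nonzero period from zero absolute periods.

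Likewise, when some residue is nonzero, your Step~1 yields only half-infinite cylinders. These do not contain embedded segments of arbitrary holonomy: a segment parallel to the core and longer than the circumference overlaps itself. You must add a higher-order pole, e.g.\ $dz/(z-z_1)^2$.

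\textbf{A simpler route.} The cleanest repair bypasses flat surgery, because the statement holds on \emph{every} $(X,P)$.
\begin{enumerate}
\item Third-kind differentials with simple poles at $p_1,p_j$ realize any residues with $\sum_j r_j=0$.
\item Fix $N\ge 2g-1$. By Riemann--Roch, differentials with poles only at $p_1$, of order at most $N+1$, form a space of dimension $g+N$. They are automatically residue-free. The exact ones $df$, with $f$ having at most an $N$-fold pole at $p_1$, form a subspace of dimension $N-g$.
\item A residue-free differential with zero periods is exact. Hence the $2g$-dimensional quotient maps isomorphically onto $H^1(X;\C)$.
\item Adding a suitable such differential corrects $\chi(\alpha_i),\chi(\beta_i)$ without changing residues.
\end{enumerate}
Flat constructions of the Chenakkod--Faraco--Gupta type are what you need when prescribing orders of zeros and poles. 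Bare surjectivity does not require that.
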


\subsection{Coupled restricted period data and the Weber--Wolf mechanism}\label{subsec:coupled}
In geometric constructions one mostly encounters a pair of compatible characters. The period data are constrained by combinatorics and by symmetry: one prescribes finitely many periods and imposes linear
``closure'' relations together with ray/positivity conditions.
In the orthodisk/Weierstrass approach to minimal surfaces, for example, one starts with Weierstrass data $(G,\, dh)$ on a
punctured surface and considers the two meromorphic differentials
\[
\omega_{\mathrm{I}} := G\, dh,\qquad \omega_{\mathrm{II}} := G^{-1}\,dh .
\]
The global period conditions for the immersion force the periods of $\omega_{\mathrm{I}}$ and $\omega_{\mathrm{II}}$ to satisfy rigid relations.
One is thus led to a \emph{simultaneous} realization problem for a restricted pair of period characters.

\medskip
\noindent\textbf{Problem.}\; Given a restricted pair of characters $(\chi_{\mathrm{I}},\chi_{\mathrm{II}})$, can one find a single punctured Riemann
surface $X$ and two meromorphic $1$--forms $\omega_{\mathrm{I}},\omega_{\mathrm{II}}\in\Om(X)$ such that
$\chi_{\omega_{\mathrm{I}}}=\chi_{\mathrm{I}}$ and $\chi_{\omega_{\mathrm{II}}}=\chi_{\mathrm{II}}$?
\medskip

The difficulty is not the separate realization of each character (Theorem~\ref{thm:intro-cfg} provides pointwise existence), but the
\emph{coupling with restricted order of zero or pole}: in concrete models each character typically determines its own conformal structure, and one must arrange that the two
conformal structures coincide.

Weber and Wolf introduced a Teichm\"uller--theoretic minimization strategy to solve this kind of coupled period problem in their orthodisk
approach to minimal surfaces~\cite{WeberWolfGAFA,WeberWolfAnnals}.
Starting from the pair of $1$--forms $\omega_{\mathrm{I}}=G\,dh$ and $\omega_{\mathrm{II}}=G^{-1}dh$, they constructed two  flat structures
(or planar domains) whose conformal types must agree in order for the Weierstrass data to live on a single Riemann surface.
The key idea is to compare extremal lengths of appropriately paired curve families on the two sides and to assemble the resulting  discrepancies into a nonnegative \emph{height} function.
A global minimizer of the height cannot be a positive local minimum, because one can ``push'' the configuration to decrease the height.
Consequently, at a global minimizer the height must vanish, forcing the two conformal structures to coincide and producing a single punctured
surface realizing both differentials.

Variants of this extremal--length minimization philosophy have since appeared in other Weierstrass--type constructions, for instance in the
study of maxfaces~\cite{BardhanBiswasKumarMaxfaces} and, more recently, in the construction of higher genus Angel surfaces~\cite{BardhanBiswasFujimoriKumarAngel}.
Motivated by the apparent portability of the argument, the present paper abstracts the Weber--Wolf mechanism: we isolate a list of analytic
inputs---formulated purely in terms of extremal length on a parameter space---under which the corresponding height function must vanish.
The purpose of this abstraction is twofold.
First, it separates the essentially Teichm\"uller--theoretic argument from the geometry that produces a given parameter space.
Second, it packages the method in a form that can be transported verbatim to other coupled period problems.

\subsection{Motivating one--parameter model}\label{subsec:motivating-example}
We briefly illustrate the mechanism in a simple one--parameter family.
Fix the oriented surface $S_{2,2}$ of genus $2$ with two punctures and write $\Gamma_{2,2}=\mathrm{H}_1(S_{2,2};\Z)$.
Choose a symplectic basis $\alpha_1,\beta_1,\alpha_2,\beta_2$ for the genus--two part and let $\delta_1,\delta_2$ be small
positively oriented loops around the punctures (so $\delta_1+\delta_2=0$ in homology).

Consider pairs of characters $(\chi_{\mathrm{I}},\chi_{\mathrm{II}})\in\Hom(\Gamma_{2,2},\C)^2$ subject to three types of restrictions:
\begin{enumerate}[label=\textnormal{(\roman*)}, itemsep=2pt, topsep=2pt]
\item a common puncture period $\chi_{\mathrm{I}}(\delta_1)=\chi_{\mathrm{II}}(\delta_1)=2\pi i\,r$ with $r\in\R\setminus\{0\}$;
\item ray conditions placing selected periods on $\R_{>0}$ and $i\R_{>0}$, with ``real/imaginary'' directions exchanged
between $\chi_{\mathrm{I}}$ and $\chi_{\mathrm{II}}$;
\item linear closure relations $\chi_{\mathrm{I}}(\alpha_1)=\chi_{\mathrm{I}}(\alpha_2)$ and $\chi_{\mathrm{II}}(\beta_1)=\chi_{\mathrm{II}}(\beta_2)$.
\end{enumerate}
These conditions cut out a restricted domain $\Delta\subset\Hom(\Gamma_{2,2},\C)^2$.
Fix $a,b>0$ and set, for $t>0$,
\[
\chi_{\mathrm{I}}(t)(\alpha_1)=\chi_{\mathrm{I}}(t)(\alpha_2)=a,\qquad
\chi_{\mathrm{I}}(t)(\beta_1)=\chi_{\mathrm{I}}(t)(\beta_2)= i\,t,\qquad
\chi_{\mathrm{I}}(t)(\delta_1)=2\pi i\,r,
\]
and
\[
\chi_{\mathrm{II}}(t)(\alpha_1)=\chi_{\mathrm{II}}(t)(\alpha_2)= i\,t,\qquad
\chi_{\mathrm{II}}(t)(\beta_1)=\chi_{\mathrm{II}}(t)(\beta_2)= b,\qquad
\chi_{\mathrm{II}}(t)(\delta_1)=2\pi i\,r.
\]
Then $t\longmapsto(\chi_{\mathrm{I}}(t),\chi_{\mathrm{II}}(t))$ parametrizes a scale--fixed slice $\Delta^{\mathrm{sc}}\subset\Delta$.

In applications one is given (or constructs) separate realization families $t\longmapsto (X_{\mathrm{I, II}}(t),\omega_{\mathrm{I, II}}(t))$ with
$\chi_{\omega_{\mathrm{I, II}}(t)}=\chi_{\mathrm{I, II}}(t)$.
The restrictions above canonically pair curve classes $\alpha_i\leftrightarrow \beta_i$.
Comparing extremal lengths along this pairing, one defines mismatch functions
\[
m_i(t):=\log\frac{\Ext_{X_{\mathrm{I}}(t)}(\alpha_i)}{\Ext_{X_{\mathrm{II}}(t)}(\beta_i)},\qquad i=1,2,
\]
and a height
\[
H(t):=m_1(t)^2+m_2(t)^2\ge 0 .
\]
Degeneration typically forces $H(t)\longrightarrow\infty$ as $t\longrightarrow 0$ or $t\longrightarrow\infty$, so $H$ attains a minimum.
The key point of the Weber--Wolf argument is that any positive local minimum can be decreased by a controlled deformation (``pushability'');
hence a global minimizer must satisfy $H(t_0)=0$.
Once one has enough paired curve families to separate Teichm\"uller space, the condition $H(t_0)=0$ forces
$X_{\mathrm{I}}(t_0)\cong X_{\mathrm{II}}(t_0)$ and gives a single punctured surface carrying both differentials.

This article explains such a mechanism in a general abstract setting, with the aim of making it directly reusable in other coupled period problems.

\medskip
\noindent\textbf{Organization of the paper.}
Section~\ref{sec:terminology} sets up the abstract terminology and states the main results.
Section~\ref{sec:config_space_new} defines restricted character domains and introduces the extremal--length data and the  hypotheses.
Under these axioms, Section~\ref{sec:proof_main} proves the existence of a reflexive point (Theorem~\ref{thm:main}) and deduces the simultaneous
realization statement (Theorem~\ref{thm:twoforms}).
Finally, Section~\ref{sec:examples} provides examples illustrating both success and failure of the hypotheses.

\section{Terminology and statement of the main theorem}\label{sec:terminology}
In the geometric problems motivating this paper, the period data is not arbitrary.
Typically one prescribes two period characters (corresponding to two geometric realizations, denoted $\mathrm{I}$ and $\mathrm{II}$),
but only after imposing a finite system of constraints
and a rule pairing the labels on the $\mathrm{I}$--side with those on the $\mathrm{II}$--side.
These requirements are intrinsic to the geometry,
so they are natural rather than artificial restrictions once one insists on the intended geometric realization.

We introduce the notion of a \emph{configuration datum} to encode these combinatorial constraints and the pairing rule, and then we define the associated \emph{restricted character domain} $\Delta_C$ as the subset of $\Hom(\Gamma,\C)^2$ cut out by these constraints.

\begin{definition}[Configuration datum]\label{def:config}
A \emph{configuration datum} is a $5$--tuple
\[
C=(E,\iota,R,\tau,\sigma)
\]
consisting of:
\begin{enumerate}
\item a finite set $E$ (the set of \emph{edge labels});
\item a map $\iota:E\longrightarrow\Gamma$ such that the subgroup $\langle\iota(E)\rangle\subset\Gamma$ generated by $\iota(E)$
equals $\Gamma$;
\item a subgroup $R=\ker(\iota_*:\Z^E\longrightarrow\Gamma)$ generated by finitely many elements
$r^{(1)},\dots,r^{(m)}\in\Z^E$ (the \emph{closure relations});
\item a map $\tau:E\longrightarrow\{h,v\}$ (the \emph{type map}), interpreted as ``horizontal'' $(h)$ or ``vertical'' $(v)$;
\item a bijection $\sigma:E\longrightarrow E$ (the \emph{edge correspondence}) such that
\[
\tau\circ\sigma=\tau .
\]
\end{enumerate}
\end{definition}

Associated to the discrete configuration datum \(C\) we define a class of \emph{admissible period assignments}. The datum \(C\) is purely combinatorial; from it we obtain a subset
\[
\Delta_C \subset \Hom(\Gamma,\C)^2
\] by imposing conditions on \((\chi^{\mathrm{I}},\chi^{\mathrm{II}})\): namely,
\((\chi^{\mathrm{I}},\chi^{\mathrm{II}})\in\Delta_C\) if and only if
\begin{enumerate}[itemsep=2pt, topsep=2pt]
\item the values of \(\chi^{\mathrm{I}}\) and \(\chi^{\mathrm{II}}\) on the labeled cycles satisfy all linear
relations in \(R\);
\item the specified values lie in the rays/half-planes dictated by \(\tau\); and
\item whenever two labels are paired by \(\sigma\), the corresponding periods of \(\chi^{\mathrm{I}}\) and
\(\chi^{\mathrm{II}}\) satisfy the matching rule prescribed by \(C\).
\end{enumerate}
In other words, \(\Delta_C\) is obtained from \(C\) by translating each piece of discrete geometric data into
an explicit algebraic or inequality constraint on the pair of characters.
Since overall scaling of period data does not change conformal type in the geometric models of interest,
we also fix scale and work on the slice (Definition~\ref{def:DeltaCsc})
\[
\Delta_C^{\mathrm{sc}}\subset \Delta_C.
\]

\begin{definition}[Admissible curve set]\label{def:AC}
An \emph{admissible curve set} for $C$ is a finite collection $\mathcal A_C$ of isotopy classes of essential
simple closed curves and properly embedded essential arcs on $S_{g,n}$,
together with a designated involution
\[
\sigma_\ast:\mathcal A_C\longrightarrow\mathcal A_C,\qquad \gamma\longmapsto \gamma^\sigma,
\]
encoding which curve family on the $\mathrm{II}$--side is paired with a given curve family on the $\mathrm{I}$--side.
\end{definition}

\medskip

For the analytic part of the paper, we fix an admissible curve set $\mathcal A_C$ and choose
continuous extremal--length assignments $\Ext_{\mathrm{I}},\,\Ext_{\mathrm{II}}$ in the sense of
Definition~\ref{def:ExtAssign}.
Concretely, for each curve family $\gamma\in\mathcal A_C$ we are given two positive functions
\[
u\longmapsto \Ext_{\mathrm{I}}(u;\gamma),\qquad
u\longmapsto \Ext_{\mathrm{II}}(u;\gamma)
\]
on $\Delta_C^{\mathrm{sc}}$. For $\gamma\in\mathcal A_C$, and \(u\in\Delta_C^{\mathrm{sc}}\) define the \emph{mismatch}
\[
m_\gamma(u)\ :=\ \log\frac{\Ext_{\mathrm{I}}(u;\gamma)}{\Ext_{\mathrm{II}}(u;\gamma^\sigma)},
\]
and  the \emph{height}
\begin{equation}\label{eq:height-def}
H(u)\ :=\ \sum_{\gamma\in\mathcal A_C} m_\gamma(u)^2.
\end{equation}
We call $u\in\Delta_C^{\mathrm{sc}}$ \emph{reflexive} if $H(u)=0$, equivalently if
\[
\Ext_{\mathrm{I}}(u;\gamma)=\Ext_{\mathrm{II}}(u;\gamma^\sigma)\qquad\text{for all }\gamma\in\mathcal A_C.
\]
\begin{Hypotheses}\label{def:axioms}
We say that $(C,\mathcal A_C,\Ext_{\mathrm{I}},\Ext_{\mathrm{II}})$ satisfies the a hypotheses if:
\begin{enumerate}[label=\textup{(H\arabic*)}, leftmargin=2.8em]
\item it is Teichm\"uller regular in the sense of Definition~\ref{def:TeichReg};
\item it is degeneration--detecting in the sense of Definition~\ref{def:DegDetect};
\item it is pushable in the sense of Definition~\ref{def:Pushable}.
\end{enumerate}
\end{Hypotheses}

We can now state the first  result: under the Hypotheses \ref{def:axioms}, the function $H$ attains the value $0$.

\begin{theorem}[Existence of a reflexive point]\label{thm:main}
Let $C$ be a configuration datum and let $\Delta_C$ be the associated restricted character domain, with scale--fixed slice
$\Delta_C^{\mathrm{sc}}$.
Fix an admissible curve set $\mathcal A_C$ and continuous extremal--length assignments $\Ext_{\mathrm{I}},\Ext_{\mathrm{II}}$,
and define $H$ by \eqref{eq:height-def}.
If the  hypotheses~\ref{def:axioms} hold, then $\Delta_C^{\mathrm{sc}}$ contains a reflexive point.
\end{theorem}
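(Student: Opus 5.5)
The argument is a direct minimization of $H$ on $\Delta_C^{\mathrm{sc}}$. \textbf{Continuity.} Since the extremal--length assignments $\Ext_{\mathrm{I}},\Ext_{\mathrm{II}}$ are continuous and positive on $\Delta_C^{\mathrm{sc}}$, each mismatch $m_\gamma$ is continuous. Hence $H=\sum_\gamma m_\gamma^2$ is a continuous, nonnegative function on $\Delta_C^{\mathrm{sc}}$. Teichm\"uller regularity (H1) should supply the remaining structure of the slice: $\Delta_C^{\mathrm{sc}}$ is a (finite--dimensional) manifold, possibly with corners, on which the realization maps to Teichm\"uller space and the extremal lengths behave regularly. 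I will use (H1) only for continuity and for a local manifold structure at interior points.

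\textbf{Properness and existence of a minimizer.} I use degeneration detection (H2), which I read as follows: whenever $u$ leaves every compact subset of $\Delta_C^{\mathrm{sc}}$ (approaching the boundary rays, coalescing periods, or escaping to infinity), some $m_\gamma(u)$ is unbounded, so $H(u)\to\infty$. Fix any $u_0\in\Delta_C^{\mathrm{sc}}$, assumed nonempty. By (H2), the sublevel set $\{u: H(u)\le H(u_0)\}$ is closed and bounded away from the degenerate locus, hence compact. Therefore $H$ attains a global minimum at some $u_*$. The minimizer also lies in the part of $\Delta_C^{\mathrm{sc}}$ where pushing is defined; if (H2) is only stated along sequences, I would argue by a minimizing sequence and extract a convergent subsequence.

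\textbf{Pushability gives vanishing.} Suppose $H(u_*)>0$. Pushability (H3) should say that at any point with $H>0$ (or at any positive local minimum) there is a continuous path $u_s$, $s\in[0,\epsilon)$, in $\Delta_C^{\mathrm{sc}}$ with $u_0=u_*$ and $H(u_s)<H(u_*)$ for small $s>0$. In the Weber--Wolf model, this comes from choosing a curve $\gamma$ with $m_\gamma(u_*)\neq0$. One then deforms so that $\Ext_{\mathrm{I}}(\cdot;\gamma)$ and $\Ext_{\mathrm{II}}(\cdot;\gamma^\sigma)$ move in opposite directions to first order, while the other mismatches change only to higher order. Such a path contradicts global minimality, so $H(u_*)=0$ and $u_*$ is reflexive.

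\textbf{Main obstacle.} The substantive step is matching the exact form of (H3) to this contradiction. If pushability only yields a direction in which $\sum_\gamma m_\gamma\,\partial m_\gamma<0$, I need (H1) to guarantee differentiability of $H$ at $u_*$, and I need $u_*$ to be interior so that the push stays inside the slice. I would first try to verify that (H2) keeps minimizers away from the boundary strata. Then I would apply (H3) in the derivative form: $dH(u_*)[v]=2\sum_\gamma m_\gamma(u_*)\,dm_\gamma(u_*)[v]<0$ for the pushing vector $v$. This immediately shows that a positive local minimum is impossible.
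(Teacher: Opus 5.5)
Your overall architecture matches the paper's. You establish continuity of $H$, then properness from degeneration detection; your fallback via a minimizing sequence with a convergent subsequence is exactly the argument of Proposition~\ref{prop:PropernessCriterion}. You then produce a first--order descent direction at any non--reflexive point, contradicting minimality. The gap is that this descent step, which is where all the content of (H3) enters, is assumed rather than proved.

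Pushability (Definition~\ref{def:Pushable}) does not directly supply a vector $v$ with $(dH)_u(v)<0$. Nor does it support your heuristic that the other mismatches ``change only to higher order''. (P2) kills $(dm_\delta)_u(V_\gamma)$ only for $\delta\notin I(\gamma)$. For $\delta\in I(\gamma)\setminus\{\gamma\}$, the terms $m_\delta(u)\,(dm_\delta)_u(V_\gamma)$ are genuinely first order and of unknown sign.

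The missing argument runs as follows. Pick $\gamma$ with $m_\gamma(u)\neq 0$. By (P2), $(dH)_u(V_\gamma)=2\sum_{\delta\in I(\gamma)} m_\delta(u)\,(dm_\delta)_u(V_\gamma)$. By (P1), the $\delta=\gamma$ term equals $-2\,|m_\gamma(u)|\,|(dm_\gamma)_u(V_\gamma)|$. Bound each remaining term by its absolute value; the domination inequality (P3) then forces $(dH)_u(V_\gamma)<0$. Only after this does the chain rule give $H(\varphi(t))<H(u)$ for small $t>0$, along a curve $\varphi$ with initial velocity $V_\gamma(u)$. That chain-rule step needs $H\in C^1$, which comes from (R1).

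Two smaller points. First, the manifold structure of $\Delta_C^{\mathrm{sc}}$ does not come from (H1). By Lemmas~\ref{lem:opencone} and~\ref{lem:DeltaCsc_manifold}, it is an open subset of an affine subspace of $V_C$, with no boundary or corners. So every minimizer is automatically interior, and your concern about pushes leaving the slice is moot. (H1) is used only through (R1); the immersion condition (R2) plays no role in this theorem. Second, your explicit assumption $\Delta_C^{\mathrm{sc}}\neq\emptyset$ is legitimate. The paper uses it tacitly: Proposition~\ref{prop:proper-min} takes an infimum over $\Omega$ and chooses a minimizing sequence.
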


The preceding theorem is formulated purely in terms of the functions $\Ext_{\mathrm{I}},\Ext_{\mathrm{II}}$ on
$\Delta_C^{\mathrm{sc}}$.
In geometric applications one is given, in addition, a structured pair of separate realization families
\[
u\longmapsto (X^{\mathrm{I}}(u),\omega_{\mathrm{I}}(u)),\qquad
u\longmapsto (X^{\mathrm{II}}(u),\omega_{\mathrm{II}}(u)),
\]
realizing the two characters separately and inducing the extremal--length assignments by
$\Ext_{\star}(u;\gamma)=\Ext_{X^{\star}(u)}(\gamma)$.
The next statement gives the geometric consequence one ultimately needs: at a reflexive parameter, the two separate
realizations come from a single punctured Riemann surface, so that both characters are realized simultaneously by two
meromorphic $1$--forms on that surface.

\begin{theorem}[Two meromorphic $1$--forms on one punctured surface]\label{thm:twoforms}
Assume the hypotheses of Theorem~\ref{thm:main}.
Assume moreover that the extremal--length assignments $\Ext_{\mathrm{I}},\Ext_{\mathrm{II}}$ are induced by continuous maps
\[
u\longmapsto X^{\mathrm{I}}(u),\ X^{\mathrm{II}}(u)\in\mathcal T_{g,n},
\]
and that for each $u=(\chi_{\mathrm{I}},\chi_{\mathrm{II}})\in\Delta_C^{\mathrm{sc}}$ there exist meromorphic $1$--forms
\[
\omega_{\mathrm{I}}(u)\in\Omega(X^{\mathrm{I}}(u)),\qquad
\omega_{\mathrm{II}}(u)\in\Omega(X^{\mathrm{II}}(u)),
\]
holomorphic on the punctured surface, with
\[
\chi_{\omega_{\mathrm{I}}(u)}=\chi_{\mathrm{I}},\qquad
\chi_{\omega_{\mathrm{II}}(u)}=\chi_{\mathrm{II}}.
\]
Let $u=(\chi_{\mathrm{I}},\chi_{\mathrm{II}})\in\Delta_C^{\mathrm{sc}}$ be a reflexive point.

Assume further that the finite curve system $\mathcal A_C$ is Teichm\"uller separating for the family image, in the sense that
whenever two marked punctured surfaces $Y,Z$ in this image satisfy
\[
\Ext_Y(\gamma)=\Ext_Z(\gamma^\sigma)\qquad\text{for all }\gamma\in\mathcal A_C,
\]
then there exists a puncture--preserving biholomorphism
\[
\Phi:Y\longrightarrow Z
\]
compatible with the markings, and then there exist a single marked punctured Riemann surface $X\in\Mgn$ and two meromorphic $1$--forms
$\eta_{\mathrm{I}},\eta_{\mathrm{II}}\in\Omega(X)$, holomorphic on the punctured surface, such that
\[
\chi_{\eta_{\mathrm{I}}}=\chi_{\mathrm{I}},\qquad \chi_{\eta_{\mathrm{II}}}=\chi_{\mathrm{II}}.
\]
\end{theorem}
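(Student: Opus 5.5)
The argument is a direct transfer along a biholomorphism, using the reflexivity hypothesis to trigger the separation property. Let $u=(\chi_{\mathrm I},\chi_{\mathrm{II}})\in\Delta_C^{\mathrm{sc}}$ be reflexive; such a point exists by Theorem~\ref{thm:main}. Set $Y:=X^{\mathrm I}(u)$ and $Z:=X^{\mathrm{II}}(u)$.

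First, $H(u)=0$ is a sum of squares of real numbers, so every mismatch vanishes: $m_\gamma(u)=0$ for all $\gamma\in\mathcal A_C$. Since the assignments are induced by the families, this reads
\[
\Ext_Y(\gamma)=\Ext_{\mathrm I}(u;\gamma)=\Ext_{\mathrm{II}}(u;\gamma^\sigma)=\Ext_Z(\gamma^\sigma)
\]
for every $\gamma\in\mathcal A_C$. Both $Y$ and $Z$ lie in the family image, so the Teichm\"uller separating hypothesis applies. It gives a puncture--preserving biholomorphism $\Phi:Y\to Z$ compatible with the markings. Here I read ``compatible with the markings'' as saying that $\Phi$ restricted to the punctured surfaces is isotopic to $f_Z\circ f_Y^{-1}$, where $f_Y:S_{g,n}\to Y\setminus P_Y$ and $f_Z:S_{g,n}\to Z\setminus P_Z$ are the markings. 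Consequently $\Phi_*$ acts as the identity on $\Gamma_{g,n}$ once both homologies are identified with $\Gamma_{g,n}$ via the markings. Since $\Phi$ preserves the labelled punctures, it also respects the puncture loops $\delta_j$.

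Next, take $X:=Y$ with its marking, so $X\in\Mgn$. Set $\eta_{\mathrm I}:=\omega_{\mathrm I}(u)$ and $\eta_{\mathrm{II}}:=\Phi^*\omega_{\mathrm{II}}(u)$. The pullback of a meromorphic form by a biholomorphism of compact surfaces is meromorphic. It is holomorphic off $\Phi^{-1}(P_Z)=P_Y$, because $\omega_{\mathrm{II}}(u)$ is holomorphic on $Z\setminus P_Z$ and $\Phi$ is puncture--preserving. Hence $\eta_{\mathrm{II}}\in\Omega(X)$.

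For the periods, take $\gamma\in\Gamma_{g,n}$ realized on $X\setminus P$ by $f_Y(\gamma)$. Then
\[
\int_{f_Y(\gamma)}\Phi^*\omega_{\mathrm{II}}(u)=\int_{\Phi\circ f_Y(\gamma)}\omega_{\mathrm{II}}(u)=\int_{f_Z(\gamma)}\omega_{\mathrm{II}}(u)=\chi_{\mathrm{II}}(\gamma).
\]
The middle equality holds because $\Phi\circ f_Y$ is isotopic to $f_Z$. Periods of closed holomorphic forms depend only on the homology class in $Z\setminus P_Z$, so the integrals agree. Together with $\chi_{\eta_{\mathrm I}}=\chi_{\mathrm I}$, which is immediate, this proves the theorem.

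The only step needing care is this marking compatibility. If the separation hypothesis gave only a biholomorphism without marking control, $\Phi_*$ could act on $\Gamma_{g,n}$ by a nontrivial mapping class, and the periods would be twisted by that action. So I would state explicitly that ``compatible with the markings'' means equality in $\mathcal T_{g,n}$, that is, $X^{\mathrm I}(u)=X^{\mathrm{II}}(u)$ as marked points. Under that reading the conclusion is immediate.

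One subtlety I would also address is the role of the involution $\sigma_*$. The separation hypothesis is phrased with the pairing $\gamma\leftrightarrow\gamma^\sigma$ built in, so its conclusion already accounts for that pairing. No further relabelling of homology classes is needed, provided the markings of $X^{\mathrm I}(u)$ and $X^{\mathrm{II}}(u)$ are the ones used to define $\chi_{\omega_{\mathrm I}(u)}$ and $\chi_{\omega_{\mathrm{II}}(u)}$.
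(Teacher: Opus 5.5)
Your proof is correct and is essentially the paper's argument. Reflexivity forces every $m_\gamma(u)=0$, the Teichm\"uller separating hypothesis then yields $\Phi:X^{\mathrm{I}}(u)\to X^{\mathrm{II}}(u)$, and on $X:=X^{\mathrm{I}}(u)$ one takes $\eta_{\mathrm{I}}=\omega_{\mathrm{I}}(u)$ and $\eta_{\mathrm{II}}=\Phi^\ast\omega_{\mathrm{II}}(u)$, checking periods by pushing cycles forward along $\Phi$. The only cosmetic difference is your reading of marking compatibility: you take it as an isotopy $\Phi\circ f_Y\simeq f_Z$, while the paper uses only its homological consequence $\Phi_\ast\circ(\mu_{\mathrm{I}})_\ast=(\mu_{\mathrm{II}})_\ast$, which is all the period computation needs.
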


\section{Configuration space of restricted characters}\label{sec:config_space_new}
Write $\Gamma=\Gamma_{g,n}$.  Throughout this section we fix a configuration datum
\[
C=(E,\iota,R,\tau,\sigma)
\]
in the sense of Definition~\ref{def:config}.  The goal is twofold:
\begin{itemize}[leftmargin=2.2em]
\item define the restricted character domain $\Delta_C$ and its scale--fixed slice $\Delta_C^{\mathrm{sc}}$;
\item give the auxiliary conformal data (admissible curves, extremal--length assignments appearing in Hypotheses~\ref{def:axioms}).
\end{itemize}

\subsection{The restricted character domain $\Delta_C$}\label{subsec:restricted-domains}
Given a character $\chi\in\Hom(\Gamma,\C)$, we encode its values on the distinguished classes $\iota(e)\in\Gamma$
by the coordinate vector
\[
p_\chi\ :=\ \bigl(\chi(\iota(e))\bigr)_{e\in E}\ \in\ \C^E .
\]
Since $\iota(E)$ generates $\Gamma$ (Definition~\ref{def:config}), the vector $p_\chi$ determines $\chi$
subject only to the relations encoded by $R=\ker(\iota_*:\Z^E\to\Gamma)$.
We write
\[
\mathcal R_h:=\R_{>0},\qquad \mathcal R_v:=i\R_{>0},\qquad
\overline{\mathcal R_h}=\mathcal R_h,\qquad \overline{\mathcal R_v}=-i\R_{>0}.
\]
Thus $\mathcal R_h$ (resp.\ $\mathcal R_v$) is the positive real (resp.\ positive imaginary) ray, and complex conjugation
fixes horizontals while reversing the orientation of verticals.
\begin{definition}[$C$--admissible pairs]\label{def:Cadm}
A pair of characters $(\chi_{\mathrm{I}},\chi_{\mathrm{II}})\in\Hom(\Gamma,\C)\times\Hom(\Gamma,\C)$ is
\emph{$C$--admissible} if there exist unit complex numbers $\zeta,\kappa\in S^1$ such that:
\begin{enumerate}[label=\textup{(C\arabic*)}, leftmargin=2.8em]
\item For every $e\in E$,
\[
\zeta^{-1}\chi_{\mathrm{I}}(\iota(e))\in \mathcal R_{\tau(e)},
\qquad
\zeta\,\kappa^{-1}\chi_{\mathrm{II}}(\iota(\sigma(e)))\in \overline{\mathcal R_{\tau(e)}} .
\]
\item For every relation $(r_e)_{e\in E}\in R$,
\[
\sum_{e\in E} r_e\,\chi_{\mathrm{I}}(\iota(e))\ =\ 0,
\qquad
\sum_{e\in E} r_e\,\chi_{\mathrm{II}}(\iota(\sigma(e)))\ =\ 0 .
\]
\item For every $e\in E$,
\[
\chi_{\mathrm{II}}(\iota(\sigma(e)))\ =\ \kappa\,\overline{\chi_{\mathrm{I}}(\iota(e))}.
\]
\end{enumerate}
\end{definition}

It is helpful to keep the following dictionary in mind.
Condition \textup{(C1)} enforces the horizontal/vertical combinatorics prescribed by $\tau$ (up to a global rotation $\zeta$),
\textup{(C2)} encodes the polygonal closing equations coming from $R$ (the ``period closure'' on each side),
and \textup{(C3)} expresses that the $\mathrm{II}$--periods are obtained from the $\mathrm{I}$--periods by conjugation, up to the
unit factor $\kappa$ (choice of reflection axis).

\begin{definition}[The domain $\Delta_C$]\label{def:DeltaC}
The \emph{restricted character domain} associated to $C$ is
\[
\Delta_C\ :=\ \Bigl\{(\chi_{\mathrm{I}},\chi_{\mathrm{II}})\in\Hom(\Gamma,\C)^2:\ (\chi_{\mathrm{I}},\chi_{\mathrm{II}})
\text{ is $C$--admissible}\Bigr\}.
\]
\end{definition}
\medskip

Choose a distinguished edge $e_0\in E$ with $\tau(e_0)=h$.
For any $(\chi_{\mathrm{I}},\chi_{\mathrm{II}})\in\Delta_C$, the ray conditions imply that
$\chi_{\mathrm{I}}(\iota(e_0))\neq 0$ lies on a ray.  Using the parameters $(\zeta,\kappa)$ from
Definition~\ref{def:Cadm}, we eliminate the inessential choices of rotation and reflection by imposing
\[
\chi_{\mathrm{I}}(\iota(e_0))\in\R_{>0},
\qquad
\chi_{\mathrm{II}}(\iota(\sigma(e)))=\overline{\chi_{\mathrm{I}}(\iota(e))}\ \ \forall e\in E .
\]
This leads to the normalized subdomain
\[
\Delta_C^{\mathrm{norm}}
:=\Bigl\{(\chi_{\mathrm{I}},\chi_{\mathrm{II}})\in \Delta_C:
\chi_{\mathrm{I}}(\iota(e_0))\in\R_{>0}
\ \text{and}\ 
\chi_{\mathrm{II}}(\iota(\sigma(e)))=\overline{\chi_{\mathrm{I}}(\iota(e))}\ \ \forall e\in E
\Bigr\}.
\]
On $\Delta_C^{\mathrm{norm}}$ the directional constraints take the simpler form
\[
\chi_{\mathrm{I}}(\iota(e))\in \mathcal R_{\tau(e)},
\qquad
\chi_{\mathrm{II}}(\iota(\sigma(e)))\in \overline{\mathcal R_{\tau(e)}},
\qquad \forall e\in E.
\]

\begin{lemma}\label{lem:opencone}
The set $\Delta_C^{\mathrm{norm}}$ is an open cone in a finite--dimensional real vector space.
\end{lemma}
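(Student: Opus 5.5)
The plan is to realize $\Delta_C^{\mathrm{norm}}$ as the image of an open cone under an injective linear map.

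\emph{Reduction to $\chi_{\mathrm{I}}$.} On $\Delta_C^{\mathrm{norm}}$ condition (C3) holds with $\kappa=1$: $\chi_{\mathrm{II}}(\iota(\sigma(e)))=\overline{\chi_{\mathrm{I}}(\iota(e))}$ for every $e\in E$. Since $\sigma$ is a bijection and $\iota(E)$ generates $\Gamma$, these equations determine $\chi_{\mathrm{II}}$ on all of $\Gamma$. The projection $(\chi_{\mathrm{I}},\chi_{\mathrm{II}})\mapsto\chi_{\mathrm{I}}$ is therefore injective on $\Delta_C^{\mathrm{norm}}$. Its inverse $\chi_{\mathrm{I}}\mapsto(\chi_{\mathrm{I}},\overline{\chi_{\mathrm{I}}}\circ(\text{relabeling by }\sigma))$ is $\R$--linear.

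The map is well defined when the conjugate assignment respects $R$, which is exactly the second half of (C2). I will simply impose both halves of (C2) as linear conditions. The ambient space is then the real vector space
\[
V:=\Bigl\{p\in\C^E:\ \textstyle\sum_e r_e p_e=0\ \forall r\in R,\ \sum_e r_e\overline{p_{\sigma^{-1}(e)}}=0\ \forall r\in R\Bigr\},
\]
viewed as an $\R$--subspace of $\C^E\cong\R^{2E}$. It is finite dimensional.

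\emph{Ray conditions.} After normalization, (C1) says $p_e\in\mathcal R_{\tau(e)}$ for all $e$, that is, $p_e=x_e$ with $x_e>0$ when $\tau(e)=h$, and $p_e=i x_e$ with $x_e>0$ when $\tau(e)=v$. The conjugate condition $\overline{p_e}\in\overline{\mathcal R_{\tau(e)}}$ is automatic.

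So $\Delta_C^{\mathrm{norm}}$ is identified with
\[
W\cap\{x\in\R^E:\ x_e>0\ \forall e\},
\]
where $W\subset\R^E$ is the linear subspace obtained by substituting $p_e=i^{[\tau(e)=v]}x_e$ into the relations defining $V$. This is the intersection of a linear subspace with the open positive orthant. It is therefore open in $W$ and closed under multiplication by positive scalars, hence an open cone in the finite--dimensional real vector space $W$, or equivalently in its linear image inside $\Hom(\Gamma,\C)^2$. It is even convex.

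I will also check that the normalization $\chi_{\mathrm{I}}(\iota(e_0))\in\R_{>0}$ is consistent with this. It is: it just means $\zeta=1$, since $\tau(e_0)=h$.

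\emph{Main obstacle.} The delicate point is the meaning of ``open'': the set is open relative to the linear span $W$, not in $\Hom(\Gamma,\C)^2$. I would state explicitly that the ambient vector space is $W$, or the span of the cone.

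A related subtlety is that the same $\chi$ can have several preimages $p$ if $\iota$ is not injective on labels. This is harmless, because then $p$ is constant on the fibres by definition, and these equalities are linear conditions included in $R$. Nonemptiness is not needed for the statement.
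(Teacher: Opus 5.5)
Your proof is correct and is essentially the paper's argument. The paper also passes to the coordinates $x_e=\chi_{\mathrm{I}}(\iota(e))$, places them in an open orthant of the real subspace $V_C$ cut out by the reality/imaginarity conditions and the relations in $R$, recovers $\chi_{\mathrm{II}}$ from the normalization, and gets the cone structure from positive scaling. One correction, which is also an improvement: your condition $\sum_e r_e\overline{p_{\sigma^{-1}(e)}}=0$ is not the second half of (C2), since under the normalization that half reads $\sum_e r_e\overline{p_e}=0$, the conjugate of the first half, and is therefore automatic. It is instead the condition that the prescribed values of $\chi_{\mathrm{II}}$ on the generators $\iota(e)$ define a homomorphism, which the paper's $V_C$ omits. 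So your description of $\Delta_C^{\mathrm{norm}}$ is the exact one, whereas the paper's ``open orthant in $V_C$'' can be too large when $R$ is not invariant under relabeling by $\sigma$; the conclusion is unaffected either way.
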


\begin{proof}
Write $x_e:=\chi_{\mathrm{I}}(\iota(e))$ for $e\in E$.  The ray constraints force
\[
x_e\in\R_{>0}\ \text{ if }\tau(e)=h,\qquad x_e\in i\R_{>0}\ \text{ if }\tau(e)=v.
\]
Hence $(x_e)_{e\in E}$ lies in an open orthant in the real subspace
\[
V_C\ :=\ \Bigl\{(z_e)_{e\in E}\in\C^E:
z_e\in\R \text{ if }\tau(e)=h,\ z_e\in i\R \text{ if }\tau(e)=v,\
\sum_{e\in E} r_e z_e=0\ \ \forall (r_e)\in R\Bigr\}.
\]
The normalization $\chi_{\mathrm{II}}(\iota(\sigma(e)))=\overline{x_e}$ determines $\chi_{\mathrm{II}}$ uniquely from $(x_e)_{e\in E}$,
so $\Delta_C^{\mathrm{norm}}$ identifies with an open orthant in $V_C$.  Positive real scaling of all $x_e$ preserves the defining
conditions, giving the cone structure.
\end{proof}

The only remaining freedom on $\Delta_C^{\mathrm{norm}}$ is simultaneous \emph{positive real scaling} of both characters.
We therefore pass to the scale--fixed slice.

\begin{definition}\label{def:DeltaCsc}
The scale--fixed slice of $\Delta_C$ is the codimension--one submanifold
\[
\Delta_C^{\mathrm{sc}}
\ :=\
\Bigl\{(\chi_{\mathrm{I}},\chi_{\mathrm{II}})\in \Delta_C^{\mathrm{norm}}:\ \chi_{\mathrm{I}}(\iota(e_0))=1\Bigr\}.
\]
\end{definition}
\begin{lemma}\label{lem:DeltaCsc_manifold}
The scale--fixed slice $\Delta_C^{\mathrm{sc}}$ is a smooth codimension--one submanifold of
$\Delta_C^{\mathrm{norm}}$. In particular, $\Delta_C^{\mathrm{sc}}$ is a (finite--dimensional) $C^\infty$ manifold.
\end{lemma}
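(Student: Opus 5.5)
By Lemma~\ref{lem:opencone}, $\Delta_C^{\mathrm{norm}}$ is identified with an open orthant $\mathcal O$ in the finite--dimensional real vector space $V_C$, via the coordinates $x=(x_e)_{e\in E}$ with $x_e=\chi_{\mathrm{I}}(\iota(e))$. I will prove the lemma entirely in these linear coordinates. Under this identification, $\Delta_C^{\mathrm{sc}}$ is the level set
\[
\mathcal O\cap\{x\in V_C:\ \ell(x)=1\},\qquad \ell(x):=x_{e_0}.
\]
The function $\ell$ is a real linear functional on $V_C$, since $\tau(e_0)=h$ forces $x_{e_0}\in\R$.

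The first step is to show that $\ell$ is not identically zero on $V_C$; it suffices to check that it is nonzero somewhere on $\mathcal O$. We may assume $\Delta_C^{\mathrm{norm}}\neq\emptyset$, since otherwise the statement is vacuous. On $\mathcal O$ we have $\ell(x)=x_{e_0}>0$, so $\ell\not\equiv 0$. Hence $\{\ell=1\}$ is an affine hyperplane of $V_C$, and it is therefore a closed $C^\infty$ submanifold of codimension one. Its intersection with the open set $\mathcal O$ is an open subset of that hyperplane, so it is a smooth codimension--one submanifold of $\mathcal O\cong\Delta_C^{\mathrm{norm}}$. Alternatively, one can invoke the regular value theorem, since $d\ell=\ell\neq 0$ at every point.

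The second step transports the smooth structure back through the identification of Lemma~\ref{lem:opencone}. That identification is the restriction of a linear injection $V_C\to\Hom(\Gamma,\C)^2$, because $\chi_{\mathrm{II}}$ is determined linearly (via conjugation) by $x$. The ambient smooth structures therefore agree, and the submanifold property holds inside $\Delta_C^{\mathrm{norm}}$ viewed inside $\Hom(\Gamma,\C)^2$.

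A remark makes the structure explicit: the cone structure gives a diffeomorphism $\Delta_C^{\mathrm{norm}}\cong\Delta_C^{\mathrm{sc}}\times\R_{>0}$, given by $x\mapsto(x/x_{e_0},\,x_{e_0})$. This exhibits $\Delta_C^{\mathrm{sc}}$ as a smooth slice of dimension $\dim V_C-1$.

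There is no real obstacle here. The only point requiring care is well-definedness of the coordinate identification: $p_\chi$ determines $\chi$ because $\iota(E)$ generates $\Gamma$, and the relations in $R$ are exactly the linear equations cutting out $V_C$. With that in place, the lemma reduces to the statement that an affine hyperplane meets an open set in a codimension--one submanifold.
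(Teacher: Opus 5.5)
Your argument is correct and is essentially the paper's proof. Both identify $\Delta_C^{\mathrm{norm}}$ with an open subset of $V_C$ via Lemma~\ref{lem:opencone}, realize $\Delta_C^{\mathrm{sc}}$ as the level set $\{x_{e_0}=1\}$ of a linear functional with nonvanishing differential, and conclude by the regular value theorem; your explicit check that $x_{e_0}\not\equiv 0$ on $V_C$ (using $x_{e_0}>0$ on a nonempty $\Delta_C^{\mathrm{norm}}$) justifies a point the paper asserts without comment, and the product decomposition $\Delta_C^{\mathrm{norm}}\cong\Delta_C^{\mathrm{sc}}\times\R_{>0}$ is a harmless extra.
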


\begin{proof}
By Lemma~\ref{lem:opencone}, $\Delta_C^{\mathrm{norm}}$ is an open subset of the finite--dimensional real vector space
$V_C$. Write $x_e:=\chi_{\mathrm{I}}(\iota(e))$ for $e\in E$. Since $\tau(e_0)=h$, the coordinate $x_{e_0}$ is real on
$V_C$, and hence defines a smooth (indeed linear) map
\[
F:\Delta_C^{\mathrm{norm}}\longrightarrow \R,\qquad
F\bigl((\chi_{\mathrm{I}},\chi_{\mathrm{II}})\bigr):=\chi_{\mathrm{I}}(\iota(e_0))=x_{e_0}.
\]
The differential $dF_u:T_u\Delta_C^{\mathrm{norm}}\to\R$ is the restriction of the nonzero linear functional
$V_C\to\R$, $(z_e)_{e\in E}\mapsto z_{e_0}$, so $dF_u$ is surjective for every $u\in\Delta_C^{\mathrm{norm}}$.
Thus $1\in\R$ is a regular value of $F$. By the regular value theorem,
\[
\Delta_C^{\mathrm{sc}}
=\bigl\{u\in\Delta_C^{\mathrm{norm}}:\chi_{\mathrm{I}}(\iota(e_0))=1\bigr\}
=F^{-1}(1)
\]
is a smooth submanifold of $\Delta_C^{\mathrm{norm}}$ of codimension one.
\end{proof}

\subsection{Admissible curves and extremal--length assignments}

To compare the two sides, we fix a finite set of test curve families on $S_{g,n}$, together with a pairing
between the $\mathrm{I}$-- and $\mathrm{II}$--families, and we prescribe (or construct) their extremal lengths as functions of
the parameter $u\in\Delta_C^{\mathrm{sc}}$.

\noindent
Fix an admissible curve set $\mathcal A_C$ for $C$ in the sense of Definition~\ref{def:AC}, and write
$\gamma^\sigma:=\sigma_\ast(\gamma)$ for the paired curve family.

\begin{definition}[Extremal--length assignment]\label{def:ExtAssign}
An \emph{extremal--length assignment} for $C$ consists of two maps
\[
\Ext_{\mathrm{I}}:\Delta_C^{\mathrm{sc}}\times \mathcal A_C\longrightarrow \R_{>0},
\qquad
\Ext_{\mathrm{II}}:\Delta_C^{\mathrm{sc}}\times \mathcal A_C\longrightarrow \R_{>0},
\]
such that for each fixed $\gamma\in\mathcal A_C$ the functions
\[
u\longmapsto \Ext_{\star}(u;\gamma),\qquad \star\in\{\mathrm{I},\mathrm{II}\},
\]
are continuous on $\Delta_C^{\mathrm{sc}}$.
\end{definition}

Given $\mathcal A_C$ and an extremal--length assignment, we define the mismatch functions and the height exactly as in
Section~\ref{sec:terminology}.  Concretely, for $\gamma\in\mathcal A_C$ set
\[
m_\gamma(u)\ :=\ \log\frac{\Ext_{\mathrm{I}}(u;\gamma)}{\Ext_{\mathrm{II}}(u;\gamma^\sigma)},
\qquad u\in\Delta_C^{\mathrm{sc}},
\]
and define
\[
H(u)\ :=\ \sum_{\gamma\in\mathcal A_C} m_\gamma(u)^2\ \ge 0 .
\]

\subsection{Analytic hypotheses on extremal length data}\label{subsec:C_axioms}

The abstract analytic requirements used in the main results are given in Hypotheses~\ref{def:axioms}.
For the arguments in Section~\ref{sec:proof_main} it is convenient to give three concrete properties of the maps
$u\mapsto \Ext_{\mathrm{I}}(u;\gamma)$ and $u\mapsto \Ext_{\mathrm{II}}(u;\gamma)$.

\begin{definition}[Teichm\"uller regularity]\label{def:TeichReg}
We say that $C$ is \emph{Teichm\"uller regular} with respect to the chosen curve set and extremal--length assignment if:
\begin{enumerate}[label=\textup{(R\arabic*)}, leftmargin=2.8em]
\item for each $\gamma\in\mathcal A_C$, the functions $u\longmapsto \Ext_{\star}(u;\gamma)$ are $C^1$ on $\Delta_C^{\mathrm{sc}}$ for
$\star\in\{\mathrm{I},\mathrm{II}\}$;
\item the log--extremal--length maps
\[
\mathbf{E}_{\star}:\Delta_C^{\mathrm{sc}}\longrightarrow \R^{\mathcal A_C},\qquad
\mathbf{E}_{\star}(u):=\bigl(\log \Ext_{\star}(u;\gamma)\bigr)_{\gamma\in\mathcal A_C},
\]
are local immersions for $\star\in\{\mathrm{I},\mathrm{II}\}$.
\end{enumerate}
\end{definition}

For a fixed curve family $\gamma$, the function $X\mapsto \Ext_X(\gamma)$ on Teichm\"uller space is real--analytic and admits
Gardiner’s variational formula; see \cite{Kerckhoff,GardinerQD,HubbardMasur} and \cite[Chapters~6--7]{GardinerLakic}.
Thus \textup{(R1)} is automatic once the parameter maps into Teichm\"uller space are $C^1$.
Moreover, \textup{(R2)} is equivalent to the statement that at each $u$ the covectors
$\{d\log\Ext_{\star}(u;\gamma)\}_{\gamma\in\mathcal A_C}$ span $T_u^\ast(\Delta_C^{\mathrm{sc}})$.

\begin{definition}[Degeneration detection]\label{def:DegDetect}
We say that $(C,\mathcal A_C)$ is \emph{degeneration--detecting} if for every sequence
$u_k\in\Delta_C^{\mathrm{sc}}$ leaving every compact subset of $\Delta_C^{\mathrm{sc}}$, there exists a curve
$\gamma\in\mathcal A_C$ such that
\[
|m_\gamma(u_k)|\longrightarrow\infty .
\]
\end{definition}

\begin{definition}[Pushability]\label{def:Pushable}
We say that $(C,\mathcal A_C)$ is \emph{pushable} if for each $\gamma\in\mathcal A_C$ there exist:
\begin{itemize}[leftmargin=2.2em]
\item a $C^1$ vector field $V_\gamma$ on $\Delta_C^{\mathrm{sc}}$, and
\item a finite incidence set $I(\gamma)\subset \mathcal A_C$ with $\gamma\in I(\gamma)$,
\end{itemize}
such that for every $u\in\Delta_C^{\mathrm{sc}}$ the following hold.
\begin{enumerate}[label=\textup{(P\arabic*)}, leftmargin=2.8em]
\item 
If $m_\gamma(u)\neq 0$, then
\(
\mathrm{sign}\bigl(m_\gamma(u)\bigr)\cdot (dm_\gamma)_u(V_\gamma) <0.
\)
\item 
If $\delta\notin I(\gamma)$ then $(dm_\delta)_u(V_\gamma)=0$. For $\delta\in I(\gamma)$ the derivatives
$(dm_\delta)_u(V_\gamma)$ are uniformly bounded in $u$.
\item 
If $m_\gamma(u)\neq 0$, then
\(
|m_\gamma(u)|\cdot\bigl|(dm_\gamma)_u(V_\gamma)\bigr|
\;>\;
\sum_{\delta\in I(\gamma)\setminus\{\gamma\}}
|m_\delta(u)|\cdot\bigl|(dm_\delta)_u(V_\gamma)\bigr|.
\)
\end{enumerate}
\end{definition}

\section{Proof of main theorems}\label{sec:proof_main}

Throughout this section, fix a configuration datum $C$ together with an admissible curve set $\mathcal A_C$ and an
extremal--length assignment $\Ext_{\mathrm{I}},\Ext_{\mathrm{II}}$ as in Section~\ref{sec:config_space_new}.
We prove Theorem~\ref{thm:main} and Theorem~\ref{thm:twoforms} stated in Section~\ref{sec:terminology}.

\medskip

We begin by writing a standard result for nonnegative functions.

\begin{proposition}\label{prop:proper-min}
Let $\Omega$ be a smooth manifold and let $f:\Omega\longrightarrow\R_{\ge 0}$ be a continuous function.
Assume that $f$ is proper on $\Omega$, and that every local minimizer of $f$ lies in the zero--set $f^{-1}(0)$.
Then $f$ attains its minimum on $\Omega$, and this minimum equals $0$.
\end{proposition}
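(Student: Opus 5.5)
The proof has two parts: first show that $f$ attains its infimum, then show that the attained value is $0$.

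\emph{Step 1: the infimum is attained.} Set $m:=\inf_\Omega f$. Since $f\ge 0$, we have $0\le m<\infty$ provided $\Omega\neq\emptyset$, which I assume; for an empty manifold the statement is vacuous. Pick any $x_0\in\Omega$ and let $c:=f(x_0)$. The sublevel set
\[
K:=\{x\in\Omega:\ f(x)\le c\}=f^{-1}([0,c])
\]
is the preimage of a compact interval. Because $f$ is proper, $K$ is compact. It is also nonempty, since $x_0\in K$. Moreover $\inf_\Omega f=\inf_K f$, because every point outside $K$ has value larger than $c\ge \inf_K f$. A continuous function on a nonempty compact set attains its minimum, so there is $x_\ast\in K$ with $f(x_\ast)=m$.

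\emph{Step 2: the minimum value is $0$.} The point $x_\ast$ is a global minimizer, hence in particular a local minimizer. By hypothesis, every local minimizer lies in $f^{-1}(0)$, so $f(x_\ast)=0$. Therefore $\min_\Omega f=0$.

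There is no real obstacle here. The only point needing care is what ``proper'' means for a map into $\R_{\ge 0}$: it must mean that preimages of compact subsets of $\R_{\ge0}$ are compact, and $[0,c]$ is such a subset. Equivalently, $f(x_k)\to\infty$ whenever $x_k$ leaves every compact set. With that reading, the smooth structure on $\Omega$ plays no role; only the topology and continuity are used.

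In the later application to Theorem~\ref{thm:main}, properness of $H$ will come from (H2), and the condition on local minimizers from the pushability hypothesis (H3).
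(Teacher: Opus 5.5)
Your proof is correct and takes the same route as the paper. Properness makes a sublevel set $f^{-1}([0,c])$ compact, so the infimum is attained, and a global minimizer is a local minimizer, hence lies in $f^{-1}(0)$. The only difference is cosmetic: you apply the extreme value theorem on one fixed nonempty sublevel set, while the paper extracts a convergent subsequence of a minimizing sequence inside $f^{-1}([0,A])$.

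One small correction: for $\Omega=\emptyset$ the conclusion is false rather than vacuous. Nonemptiness is therefore a tacit hypothesis, which the paper also assumes when it writes $m\in\R_{\ge 0}$.
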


\begin{proof}
Set $m:=\inf_{\Omega} f\in\R_{\ge 0}$ and choose a minimizing sequence $(x_k)_{k\ge 1}\subset \Omega$ such that
$f(x_k)\longrightarrow m$.

Fix any $A>m$. Since $f(x_k)\longrightarrow m$, there exists $k_0$ such that $f(x_k)\le A$ for all $k\ge k_0$, hence
$\{x_k:k\ge k_0\}\subset K_A:=f^{-1}([0,A])$.
Properness of $f$ implies that $K_A$ is compact. Therefore $(x_k)_{k\ge k_0}$ has a convergent subsequence;
relabel so that $x_k\longrightarrow x_\infty\in K_A\subset\Omega$.

By continuity of $f$ we obtain
\[
f(x_\infty)=\lim_{k\longrightarrow\infty} f(x_k)=m,
\]
so $x_\infty$ is a global minimizer of $f$. In particular, $x_\infty$ is a local minimizer, hence by hypothesis
$x_\infty\in f^{-1}(0)$. Thus $m=f(x_\infty)=0$, and $f$ attains its minimum value $0$ on $\Omega$.
\end{proof}

We now verify properness of the height function $H$ under degeneration detection.

\begin{proposition}[Properness criterion]\label{prop:PropernessCriterion}
If $(C,\mathcal A_C)$ is degeneration--detecting in the sense of Definition~\ref{def:DegDetect}, then the
height function $H$ is proper on $\Delta_C^{\mathrm{sc}}$.
\end{proposition}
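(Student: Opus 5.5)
The plan is to prove that for every $A\ge 0$ the sublevel set $K_A:=H^{-1}([0,A])$ is compact in $\Delta_C^{\mathrm{sc}}$. Since $H$ is continuous and real-valued, and every compact subset of $\R_{\ge 0}$ lies in some interval $[0,A]$, this is equivalent to properness. First, $H$ is continuous: each $m_\gamma$ is the logarithm of a quotient of positive continuous functions (Definition~\ref{def:ExtAssign}), and $H$ is a finite sum of their squares. Hence $K_A$ is closed in $\Delta_C^{\mathrm{sc}}$.

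Next, $\Delta_C^{\mathrm{sc}}$ is a finite-dimensional manifold by Lemma~\ref{lem:DeltaCsc_manifold}. In particular it is metrizable and $\sigma$-compact, so it admits an exhaustion by compact sets $L_1\subset L_2\subset\cdots$ with $L_j\subset\operatorname{int}L_{j+1}$ and $\bigcup_j L_j=\Delta_C^{\mathrm{sc}}$. Therefore compactness of a closed set can be tested by sequences.

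The main step is by contradiction. Suppose $K_A$ is not compact. Then it is not contained in any $L_j$, since a closed subset of a compact set is compact. Choose $u_k\in K_A\setminus L_k$. Any compact $L\subset\Delta_C^{\mathrm{sc}}$ is covered by the interiors of the $L_j$, hence lies in some $L_j$. So $u_k\notin L$ for all $k\ge j$, and $(u_k)$ leaves every compact subset. By degeneration detection (Definition~\ref{def:DegDetect}) there is $\gamma\in\mathcal A_C$ with $|m_\gamma(u_k)|\to\infty$. But $H(u_k)\ge m_\gamma(u_k)^2$, so $H(u_k)\to\infty$, contradicting $H(u_k)\le A$.

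Hence $K_A$ is closed and every sequence in it has a convergent subsequence: otherwise, after passing to a subsequence with no accumulation point, the sequence would leave every compact set and the same argument applies. So $K_A$ is compact. The only delicate point is this translation between ``not relatively compact'' and ``leaving every compact subset'', which the exhaustion handles. Note that the curve $\gamma$ may depend on the sequence, which is harmless because $H$ dominates every $m_\gamma^2$.
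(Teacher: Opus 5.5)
Your proof is correct and follows essentially the same route as the paper: a sequence in the sublevel set $K_A$ that escapes every compact set would, by degeneration detection, force $H(u_k)\ge m_\gamma(u_k)^2\to\infty$, contradicting $H(u_k)\le A$. The only difference is cosmetic. You build the escaping sequence from a compact exhaustion and finish with ``a closed subset of a compact set is compact'', whereas the paper argues directly with sequential compactness in the metrizable manifold $\Delta_C^{\mathrm{sc}}$.
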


\begin{proof}
Fix $A\ge 0$ and consider the sublevel set
\[
K_A:=H^{-1}([0,A])=\{u\in \Delta_C^{\mathrm{sc}}: H(u)\le A\}.
\]
By Lemma~\ref{lem:DeltaCsc_manifold}, the space $\Delta_C^{\mathrm{sc}}$ is a finite--dimensional smooth manifold, hence
metrizable; therefore it suffices to show that every sequence in $K_A$ admits a convergent subsequence with limit still in
$K_A$.

Let $(u_k)$ be a sequence in $K_A$. If $(u_k)$ had no convergent subsequence, then it would leave every compact subset of
$\Delta_C^{\mathrm{sc}}$: indeed, if some compact $K\subset\Delta_C^{\mathrm{sc}}$ contained infinitely many terms of the
sequence, then those terms would admit a convergent subsequence by compactness of $K$.

Hence, assuming for contradiction that $(u_k)$ has no convergent subsequence, degeneration detection provides
$\gamma\in\mathcal A_C$ such that $|m_\gamma(u_k)|\longrightarrow\infty$. But by definition of $H$,
\[
H(u_k)=\sum_{\delta\in\mathcal A_C} m_\delta(u_k)^2 \;\ge\; m_\gamma(u_k)^2,
\]
so $H(u_k)\longrightarrow\infty$, contradicting the uniform bound $H(u_k)\le A$.
Therefore $(u_k)$ has a convergent subsequence $u_{k_j}\longrightarrow u_\infty$ in $\Delta_C^{\mathrm{sc}}$.

Finally, $\mathcal A_C$ is finite and the extremal--length assignments are continuous by
Definition~\ref{def:ExtAssign}, so each mismatch $m_\delta$ (hence $m_\delta^2$) is continuous on $\Delta_C^{\mathrm{sc}}$.
Thus $H$ is continuous, and
\[
H(u_\infty)=\lim_{j\longrightarrow\infty}H(u_{k_j})\le A,
\]
i.e.\ $u_\infty\in K_A$.
Thus $K_A$ is compact, and $H$ is proper.
\end{proof}

Next we verify the `no positive local minima'' property for $H$ under Teichm\"uller regularity and pushability.

\begin{proposition}\label{prop:NoNonreflexiveMin}
Assume $C$ is Teichm\"uller regular (Definition~\ref{def:TeichReg}) and $(C,\mathcal A_C)$ is pushable
(Definition~\ref{def:Pushable}). Then every local minimizer of $H$ is reflexive.
\end{proposition}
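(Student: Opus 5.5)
I argue by contradiction. Let $u_0\in\Delta_C^{\mathrm{sc}}$ be a local minimizer of $H$ and suppose $H(u_0)>0$. Then some $\gamma\in\mathcal A_C$ has $m_\gamma(u_0)\neq 0$; fix one. By (R1) each $m_\delta$ is $C^1$, being a difference of logarithms of positive $C^1$ functions, so $H$ is $C^1$ and
\[
(dH)_{u_0}(V_\gamma)\;=\;2\sum_{\delta\in\mathcal A_C} m_\delta(u_0)\,(dm_\delta)_{u_0}(V_\gamma).
\]
Here $V_\gamma$ is the $C^1$ vector field supplied by pushability (Definition~\ref{def:Pushable}). The plan is to show this derivative is strictly negative. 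Then flowing along $V_\gamma$ from $u_0$ strictly decreases $H$ for small positive times, contradicting local minimality.

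The sum splits into three parts. By (P2), every $\delta\notin I(\gamma)$ contributes zero. The term $\delta=\gamma$ equals $m_\gamma(u_0)(dm_\gamma)_{u_0}(V_\gamma)$. By (P1) this is $-|m_\gamma(u_0)|\,|(dm_\gamma)_{u_0}(V_\gamma)|$, which is strictly negative: (P1) forces $(dm_\gamma)_{u_0}(V_\gamma)\neq0$, with sign opposite to that of $m_\gamma(u_0)$. For $\delta\in I(\gamma)\setminus\{\gamma\}$, bound each term by $|m_\delta(u_0)|\,|(dm_\delta)_{u_0}(V_\gamma)|$. Then (P3) gives
\[
\tfrac12(dH)_{u_0}(V_\gamma)\;\le\; -|m_\gamma(u_0)|\,|(dm_\gamma)_{u_0}(V_\gamma)| \;+\;\sum_{\delta\in I(\gamma)\setminus\{\gamma\}} |m_\delta(u_0)|\,|(dm_\delta)_{u_0}(V_\gamma)| \;<\;0 .
\]
Uniform boundedness in (P2) is not needed at a single point; only the strict inequality matters.

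To finish, let $c(t)$ be the integral curve of $V_\gamma$ through $u_0$. It exists for small $|t|$ because $V_\gamma$ is $C^1$ on the manifold $\Delta_C^{\mathrm{sc}}$ (Lemma~\ref{lem:DeltaCsc_manifold}). Then $\frac{d}{dt}H(c(t))|_{t=0}<0$, so $H(c(t))<H(u_0)$ for small $t>0$, and $c(t)\to u_0$. This contradicts local minimality, so $H(u_0)=0$ and $u_0$ is reflexive.

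The main subtlety is the case $V_\gamma(u_0)=0$, where no push exists. This is ruled out because (P1) makes $(dm_\gamma)_{u_0}(V_\gamma)$ nonzero, which forces $V_\gamma(u_0)\neq0$. The immersion condition (R2) is not needed here beyond guaranteeing the differential framework.
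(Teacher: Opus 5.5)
Your proof is correct and follows the paper's argument essentially step for step: you use (P2) to restrict the sum to $I(\gamma)$, (P1) to fix the sign of the $\gamma$-term, and (P3) to conclude $(dH)_{u}(V_\gamma)<0$. You then move along a curve through $u$ with initial velocity $V_\gamma(u)$ to contradict local minimality; you take the integral curve of $V_\gamma$, while the paper takes any smooth curve with that initial velocity, which makes no real difference.
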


\begin{proof}
Recall that
\[
H(u)=\sum_{\delta\in\mathcal A_C} m_\delta(u)^2.
\]
By Teichm\"uller regularity, each $m_\delta$ is $C^1$, hence $H$ is $C^1$ on $\Delta_C^{\mathrm{sc}}$.

Let $u\in\Delta_C^{\mathrm{sc}}$ be non--reflexive. Then $H(u)>0$, so there exists $\gamma\in\mathcal A_C$ with
$m_\gamma(u)\neq 0$. Let $V_\gamma$ denote the push vector field provided by pushability.
Differentiating $H$ in the direction $V_\gamma$ gives
\begin{equation}\label{eq:dH-along-V}
(dH)_u(V_\gamma)
=2\sum_{\delta\in\mathcal A_C} m_\delta(u)\cdot (dm_\delta)_u(V_\gamma).
\end{equation}
By \textup{(P2)}, $(dm_\delta)_u(V_\gamma)=0$ for $\delta\notin I(\gamma)$, hence the sum in
\eqref{eq:dH-along-V} may be restricted to $\delta\in I(\gamma)$.

For the $\delta=\gamma$ term, \textup{(P1)} gives
\[
\mathrm{sign}(m_\gamma(u))\cdot (dm_\gamma)_u(V_\gamma)<0,
\]
so $(dm_\gamma)_u(V_\gamma)$ has the opposite sign to $m_\gamma(u)$ and therefore
\[
m_\gamma(u)\,(dm_\gamma)_u(V_\gamma)=-\,|m_\gamma(u)|\,\bigl|(dm_\gamma)_u(V_\gamma)\bigr|.
\]
For each $\delta\in I(\gamma)\setminus\{\gamma\}$ we simply use the estimate
\[
m_\delta(u)\,(dm_\delta)_u(V_\gamma)
\le |m_\delta(u)|\,\bigl|(dm_\delta)_u(V_\gamma)\bigr|.
\]
Substituting these inequalities into \eqref{eq:dH-along-V} gives
\[
(dH)_u(V_\gamma)
\le
-2\,|m_\gamma(u)|\,\bigl|(dm_\gamma)_u(V_\gamma)\bigr|
+2\sum_{\delta\in I(\gamma)\setminus\{\gamma\}}
|m_\delta(u)|\,\bigl|(dm_\delta)_u(V_\gamma)\bigr|.
\]
By \textup{(P3)} the right--hand side is strictly negative, hence
\begin{equation}\label{eq:dH-negative}
(dH)_u(V_\gamma)<0.
\end{equation}

Choose a smooth curve $\varphi:(-\varepsilon,\varepsilon)\to \Delta_C^{\mathrm{sc}}$ such that
$\varphi(0)=u$ and $\varphi'(0)=V_\gamma(u)$ (for instance, take $\varphi$ to be a straight line in a local coordinate chart).
The chain rule gives
\[
\left.\frac{d}{dt}\right|_{t=0} H(\varphi(t))=(dH)_u(V_\gamma)<0.
\]
Therefore $H(\varphi(t))<H(u)$ for all sufficiently small $t>0$, which contradicts the assumption that $u$ is a
local minimizer of $H$. Hence every local minimizer of $H$ must be reflexive.
\end{proof}


\subsection{Existence of reflexive points: proof of Theorem~\ref{thm:main}}\label{sec:main_theorem}

\begin{proof}[Proof of Theorem~\ref{thm:main}]
Assume $C$ satisfies the Hypothesis~\ref{def:axioms}. In particular, $(C,\mathcal A_C)$ is
degeneration--detecting, Teichm\"uller regular, and pushable.

By Proposition~\ref{prop:PropernessCriterion}, the height function $H$ is proper on $\Delta_C^{\mathrm{sc}}$.
By Proposition~\ref{prop:NoNonreflexiveMin}, every local minimizer of $H$ is reflexive; equivalently, every local minimizer
lies in $H^{-1}(0)$.

Apply Proposition~\ref{prop:proper-min} to $f=H$ on $\Omega=\Delta_C^{\mathrm{sc}}$.
We conclude that $H$ attains its minimum on $\Delta_C^{\mathrm{sc}}$, and that this minimum equals $0$.
Thus there exists $u\in\Delta_C^{\mathrm{sc}}$ with $H(u)=0$, i.e.\ a reflexive point.
\end{proof}

\subsection{Two compatible characters on one surface: proof of Theorem~\ref{thm:twoforms}}\label{subsec:twoforms}

For $u=(\chi_{\mathrm{I}},\chi_{\mathrm{II}})\in\Delta_C$ we assume, as in the statement of
Theorem~\ref{thm:twoforms}, that $u$ comes with a \emph{realization} by two marked punctured Riemann surfaces
$X^{\mathrm{I}}(u),X^{\mathrm{II}}(u)$ and meromorphic differentials
\[
\omega_{\mathrm{I}}(u)\in\Omega\bigl(X^{\mathrm{I}}(u)\bigr),\qquad
\omega_{\mathrm{II}}(u)\in\Omega\bigl(X^{\mathrm{II}}(u)\bigr),
\]
whose period characters are $\chi_{\mathrm{I}},\chi_{\mathrm{II}}$, respectively.

\begin{corollary}\label{cor:twoforms}
Assume that $\mathcal A_C$ is Teichm\"uller separating for the family
\[
u\longmapsto \bigl(X^{\mathrm{I}}(u),X^{\mathrm{II}}(u)\bigr)
\qquad (u\in\Delta_C)
\]
in the sense of Theorem~\ref{thm:twoforms}.
If $u\in\Delta_C^{\mathrm{sc}}$ is reflexive, then $X^{\mathrm{I}}(u)$ and $X^{\mathrm{II}}(u)$ are conformally equivalent.
\end{corollary}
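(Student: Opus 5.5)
The corollary is essentially a direct unwinding of definitions. The plan is to translate reflexivity, which is the statement $H(u)=0$, into equality of paired extremal lengths on the two surfaces $X^{\mathrm{I}}(u)$ and $X^{\mathrm{II}}(u)$. We then feed this equality into the Teichm\"uller separating hypothesis formulated in Theorem~\ref{thm:twoforms}.

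The first step uses that $H(u)=\sum_{\gamma\in\mathcal A_C} m_\gamma(u)^2$ is a finite sum of nonnegative terms. So $H(u)=0$ forces $m_\gamma(u)=0$ for every $\gamma\in\mathcal A_C$. Exponentiating the definition of $m_\gamma$ then gives
\[
\Ext_{\mathrm{I}}(u;\gamma)=\Ext_{\mathrm{II}}(u;\gamma^\sigma)\qquad\text{for all }\gamma\in\mathcal A_C .
\]
The second step invokes the standing assumption that the assignments are induced by the realization families, namely $\Ext_\star(u;\gamma)=\Ext_{X^\star(u)}(\gamma)$. Rewriting the previous equality in these terms yields
\[
\Ext_{X^{\mathrm{I}}(u)}(\gamma)=\Ext_{X^{\mathrm{II}}(u)}(\gamma^\sigma)\qquad\text{for all }\gamma\in\mathcal A_C .
\]

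The third step applies the separating property with $Y=X^{\mathrm{I}}(u)$ and $Z=X^{\mathrm{II}}(u)$. Both surfaces lie in the image of the family, since $u\in\Delta_C^{\mathrm{sc}}\subset\Delta_C$. The property produces a puncture--preserving biholomorphism $\Phi:X^{\mathrm{I}}(u)\to X^{\mathrm{II}}(u)$ compatible with the markings, which is exactly the asserted conformal equivalence.

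I do not expect a genuine obstacle; the argument is bookkeeping. The one point needing care is the direction of the pairing: the separating hypothesis is phrased as $\Ext_Y(\gamma)=\Ext_Z(\gamma^\sigma)$, and this matches the mismatch $m_\gamma$ only if $Y$ is taken on the $\mathrm{I}$--side and $Z$ on the $\mathrm{II}$--side. Because $\sigma_\ast$ is an involution, the roles could also be swapped if needed, so no issue arises.
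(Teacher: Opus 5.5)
Your proposal is correct and follows essentially the same route as the paper: $H(u)=0$ forces every $m_\gamma(u)=0$, hence $\Ext_{\mathrm{I}}(u;\gamma)=\Ext_{\mathrm{II}}(u;\gamma^\sigma)$ for all $\gamma\in\mathcal A_C$, and the Teichm\"uller separating hypothesis then produces the puncture--preserving biholomorphism $\Phi$. Your explicit use of $\Ext_\star(u;\gamma)=\Ext_{X^\star(u)}(\gamma)$ and your remark on the direction of the pairing state openly what the paper's proof leaves implicit.
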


\begin{proof}
Let $u\in\Delta_C^{\mathrm{sc}}$ be reflexive. By definition of reflexivity, $H(u)=0$, hence
\[
m_\gamma(u)=0\qquad\text{for every }\gamma\in\mathcal A_C.
\]
Equivalently, for every $\gamma\in\mathcal A_C$ we have the extremal--length matching
\begin{equation}\label{eq:EL-match-cor}
\Ext_{\mathrm{I}}(u;\gamma)=\Ext_{\mathrm{II}}(u;\gamma^\sigma).
\end{equation}
By the Teichm\"uller separating hypothesis, the collection of equalities \eqref{eq:EL-match-cor} forces the existence of a
puncture--preserving biholomorphism
\[
\Phi:\,X^{\mathrm{I}}(u)\longrightarrow X^{\mathrm{II}}(u)
\]
that is compatible with the chosen markings in the $\sigma_\ast$--convention.
\end{proof}

\begin{proof}[Proof of Theorem~\ref{thm:twoforms}]
Let $u=(\chi_{\mathrm{I}},\chi_{\mathrm{II}})\in\Delta_C^{\mathrm{sc}}$ be reflexive and choose the associated realizations
\[
\bigl(X^{\mathrm{I}}(u),\omega_{\mathrm{I}}(u)\bigr),\qquad
\bigl(X^{\mathrm{II}}(u),\omega_{\mathrm{II}}(u)\bigr)
\]
as in the statement. Set $X^{\mathrm{I}}:=X^{\mathrm{I}}(u)$ and $X^{\mathrm{II}}:=X^{\mathrm{II}}(u)$ for brevity.

By Corollary~\ref{cor:twoforms},  there exists a puncture--preserving biholomorphism
\[
\Phi:\,X^{\mathrm{I}}\longrightarrow X^{\mathrm{II}}
\]
compatible with the markings in the $\sigma_\ast$--convention.

Define a single punctured surface and two meromorphic $1$--forms on it by
\[
X:=X^{\mathrm{I}},\qquad
\eta_{\mathrm{I}}:=\omega_{\mathrm{I}}(u)\in\Omega(X),\qquad
\eta_{\mathrm{II}}:=\Phi^\ast\omega_{\mathrm{II}}(u)\in\Omega(X).
\]
Then $\eta_{\mathrm{I}}$ and $\eta_{\mathrm{II}}$ are meromorphic on $X$ and holomorphic on the punctured surface.

\medskip
\noindent\textit{Digression: Marking compatibility.}
Let $\mu_{\mathrm{I}}:S_{g,n}\cong X^{\mathrm{I}}\setminus P^{\mathrm{I}}$ and
$\mu_{\mathrm{II}}:S_{g,n}\cong X^{\mathrm{II}}\setminus P^{\mathrm{II}}$ denote the chosen markings.
In particular, these markings identify $\Gamma_{g,n}=H_1(S_{g,n};\Z)$ with the homology of the punctured surfaces.
Compatibility of $\Phi$ with the markings (in the $\sigma_\ast$--convention) means that the induced map on homology makes the
diagram
\begin{center}
\begin{tikzpicture}[>=Stealth]
\node (G) at (0,0) {$\Gamma_{g,n}$};
\node (HI) at (4,1) {$H_1(X^{\mathrm{I}}\setminus P^{\mathrm{I}};\Z)$};
\node (HII) at (4,-1) {$H_1(X^{\mathrm{II}}\setminus P^{\mathrm{II}};\Z)$};
\draw[->] (G) -- node[above left] {$\ (\mu_{\mathrm{I}})_\ast$} (HI);
\draw[->] (G) -- node[below left] {$\ (\mu_{\mathrm{II}})_\ast$} (HII);
\draw[->] (HI) -- node[right] {$\Phi_\ast$} (HII);
\end{tikzpicture}
\end{center}
commute (the dependence on $\sigma_\ast$ is encoded in how the markings are chosen in the hypothesis of
Theorem~\ref{thm:twoforms}).

It remains to identify the period characters.
Fix $\gamma\in\Gamma_{g,n}$ and represent $(\mu_{\mathrm{I}})_\ast(\gamma)$ by a $1$--cycle $c$ on the punctured surface
$X=X^{\mathrm{I}}$.
By definition of pullback,
\[
\int_c \eta_{\mathrm{II}}
=
\int_c \Phi^\ast\omega_{\mathrm{II}}(u)
=
\int_{\Phi_\ast c}\omega_{\mathrm{II}}(u).
\]
By the commutative diagram above, the cycle $\Phi_\ast c$ represents $(\mu_{\mathrm{II}})_\ast(\gamma)$ on
$X^{\mathrm{II}}$, hence
\[
\int_{\Phi_\ast c}\omega_{\mathrm{II}}(u)=\chi_{\mathrm{II}}(\gamma).
\]
Therefore $\chi_{\eta_{\mathrm{II}}}=\chi_{\mathrm{II}}$.
Similarly, since $\eta_{\mathrm{I}}=\omega_{\mathrm{I}}(u)$ on $X^{\mathrm{I}}$, we have $\chi_{\eta_{\mathrm{I}}}=\chi_{\mathrm{I}}$.
This proves Theorem~\ref{thm:twoforms}.
\end{proof}

The argument above separates cleanly into two inputs.
First, reflexivity provides the extremal--length equalities \eqref{eq:EL-match-cor} along the admissible set $\mathcal A_C$.
Second, the Teichm\"uller separating property asserts that these equalities already determine the conformal structure of the
marked punctured surface (up to the $\sigma_\ast$ marking convention), and hence produce the biholomorphism $\Phi$.
In concrete minimal--surface applications, low--complexity configurations can fail precisely because one cannot choose
$\mathcal A_C$ to satisfy both roles at once: if $\mathcal A_C$ is too small, it may not detect degenerations or may fail to
separate Teichm\"uller space, and then the minimization and/or identification steps break down.

\section{Model cases for the synchronization method}\label{sec:examples}

Hypotheses~\ref{def:axioms} are formulated to distill an analytic synchronization mechanism
that is already present in the classical minimal--surface constructions of Weber and Wolf.
There, one starts with two {a priori} different families of conformal structures arising from paired
planar domains (orthodisks) with prescribed combinatorics, compares extremal lengths of distinguished
(edge--parallel) curve families on the two sides, and minimizes a {height} function given by a sum of
squared log--ratios in order to force equality of the conformal structures.  In the setting of
\cite{WeberWolfGAFA,WeberWolfAnnals}, Teichm\"uller regularity, degeneration detection, and pushability are
verified by explicit extremal--length estimates for these edge--parallel cycles, and the resulting reflexive
parameter is exactly the point at which the two sides synchronize, yielding the desired Weierstrass data for
the minimal surface.  In this sense, the results of
\cite{WeberWolfGAFA,WeberWolfAnnals,BardhanBiswasFujimoriKumarAngel,BardhanBiswasKumarMaxfaces}
may be viewed as direct example of the framework developed here.

\medskip
The examples in this section are organized slightly differently from the formal construction
in Definition~\ref{def:config}.  Rather than beginning by specifying a discrete configuration datum
\(C=(E,\iota,\tau,R,\dots)\) and then extracting the corresponding \(C\)-compatible restricted characters, we
start from concrete surface families for which the relevant compatibility relation between the two restricted
characters is visible by inspection.  The point is not to change the notion of compatibility, but to
emphasize that the height--minimization/synchronization method applies whenever one can produce
two restricted characters related by a fixed compatibility constraint on a scale--fixed slice---even if that
constraint is realized by different choices of discrete data \(C\) in the background.

\medskip
We give two complementary examples.  In the first, we work with a completely explicit family and verify
Hypotheses~\ref{def:axioms} by direct extremal--length computations; Theorem~\ref{thm:main} then produces a
\emph{reflexive} parameter, and Corollary~\ref{cor:twoforms} yields a simultaneous realization on a single
punctured surface.  In the second, we show that in a natural configuration the edge--parallel admissible
class is too small to produce Teichm\"uller--regular extremal--length coordinates, so
Hypothesis~\ref{def:axioms}(H1) fails and the main theorem cannot be applied.

\medskip

\subsection{A configuration where the theorem works}\label{subsec:good_example}

In the article, the two sides are organized starting from a discrete configuration datum \(C\),
which packages the combinatorics (edge set, incidence, involutions/permutations, gluing data, etc.) and
thereby produces a restricted character domain \(\Delta_C^{\mathrm{sc}}\) and a notion of \(C\)-compatible
pairs of restricted characters.
In the first example below we deliberately \emph{present the same structure from the opposite direction}:
we begin with a very concrete geometric family and write down directly the induced relation between the two
restricted characters (hence the two marked surfaces) that plays the role of “compatibility”.
Although this relation is expressed in a simpler form than the general definition, it is not a new
framework: it is precisely the compatibility induced by an appropriate choice of discrete datum \(C\) on the
relevant slice.  In other words, we are not changing the method; we are choosing a particularly transparent
model in which the induced \(C\)-compatibility can be read off immediately.

The point of this example is to make the abstract direction of the main theorem feel concrete.
Abstractly, one starts with a configuration datum \(C\), hence a restricted character domain
\(\Delta_C^{\mathrm{sc}}\), and then seeks a parameter \(u\in \Delta_C^{\mathrm{sc}}\) for which the two
associated marked surfaces \(\,X_{\mathrm{I}}(u)\) and \(X_{\mathrm{II}}(u)\,\) \emph{synchronize} (i.e.\ represent the same point of Teichm\"uller space).
In this example we keep the same logical order, but we describe the objects in a very explicit geometric model,
so the reader can see immediately what the restricted characters are and what synchronization means.   

\medskip

Fix a genus--two surface \(S_2\) and choose simple closed curves
\(\alpha_1,\beta_1,\alpha_2,\beta_2\in \Gamma:=\mathrm{H}_1(S_2\,;\,\Z)\)
with the usual symplectic intersection pattern.
Let
\[
J:\Gamma\longrightarrow \Gamma
\]
be the automorphism determined by
\(
J(\alpha_i)=\beta_i
\)
and
\(
J(\beta_i)=-\alpha_i
\)
for \(i=1,2\).
Intuitively, \(J\) exchanges the ``horizontal'' and ``vertical'' directions on each torus component.

\medskip

If \((X,\omega)\) is a Riemann surface with a holomorphic \(1\)--form and
\(f:S_2\to X\) is a marking, we denote the corresponding period character by
\[
\chi_{\omega}^{\,f}:\Gamma\longrightarrow \C,
\qquad
\chi_{\omega}^{\,f}(\gamma)=\int_{f_*(\gamma)}\omega.
\]
Precomposing the marking by \(J\) produces a second character
\[
\chi_{\omega}^{\,f\circ J}=\chi_{\omega}^{\,f}\circ J.
\]
Thus the basic ``compatibility'' built into this configuration is simply:
\[
\chi_{\mathrm{II}}=\chi_{\mathrm{I}}\circ J.
\]
Equivalently, the same underlying pair \((X,\omega)\) gives two \emph{marked} surfaces
\[
X_{\mathrm{I}}:=(X,f),
\qquad
X_{\mathrm{II}}:=(X,f\circ J),
\]
which need not represent the same point of Teichm\"uller space.
The synchronization problem for this pair is:

\begin{center}
\emph{When does there exist a conformal self--map of \(X\) whose action on \(\Gamma\) is \(J\)?}
\end{center}

\medskip

We now build an explicit family \(X(u)\) for which the associated restricted pair
\((\chi_{\mathrm{I}}(u),\chi_{\mathrm{II}}(u))\) is easy to write down, and then in \S5.1.2 we compute
extremal lengths on \(X(u)\) that detect synchronization.

\subsubsection{From the surface to a configuration space (a scale--fixed slice)}

Fix positive real numbers \(a,b,c\) and a slit length \(\ell\) with \(0<\ell<\min\{b,c\}\).
Let
\[
R_1=[0,a]\times[0,b],
\qquad
R_2=[0,a]\times[0,c],\]
with Euclidean coordinate \(z=x+iy\).
Form the rectangular tori
\[
T_1:=R_1/\{(0,y)\sim(a,y)\ \text{and}\ (x,0)\sim(x,b)\},
\;
T_2:=R_2/\{(0,y)\sim(a,y)\ \text{and}\ (x,0)\sim(x,c)\}.
\]
Let \(s_1\subset T_1\) and \(s_2\subset T_2\) be the images of the vertical segments
\[
\{a/2\}\times[0,\ell]\subset R_1,
\qquad
\{a/2\}\times[0,\ell]\subset R_2.
\]
Cut each \(T_j\) open along \(s_j\), and glue the two cut boundaries crosswise by translation.
The result is a connected translation surface \(X(a,b,c)\) of genus \(2\) carrying the holomorphic
\(1\)--form induced by \(dz\).

Denote by \(\alpha_1\) (resp.\ \(\alpha_2\)) the horizontal core curve on the \(T_1\) (resp.\ \(T_2\)) side,
and by \(\beta_1\) (resp.\ \(\beta_2\)) the vertical core curve on the \(T_1\) (resp.\ \(T_2\)) side.
Choose a marking \(f:S_2\to X(a,b,c)\) sending the fixed topological curves
\(\alpha_i,\beta_i\subset S_2\) to these geometric core curves; we keep the notation \(f\) implicit in what follows.

The period character of \(dz\) on \(X(a,b,c)\) is
\[
\chi_{dz}(\alpha_1)=\chi_{dz}(\alpha_2)=a,
\qquad
\chi_{dz}(\beta_1)=i\,b,
\qquad
\chi_{dz}(\beta_2)=i\,c.
\]
This is the restriction that defines the relevant slice of the character domain in this example: the two
``horizontal'' periods agree.

We view \((b,c)\) as a scale--fixed slice by setting \(a=1\).  For notational simplicity we suppress the
subscript \(C\) and write
\[
\Delta^{\mathrm{sc}}_{}
:=\{(b,c)\in\R_{>0}^2:\ \ell<\min\{b,c\}\},
\qquad
u=(b,c)\in \Delta^{\mathrm{sc}}_{},
\]
and we define
\[
X(u):=X(1,b,c),\qquad \omega(u):=dz\ \text{on}\ X(u),
\]
together with the two marked surfaces
\[
X_{\mathrm{I}}(u):=(X(u),f),
\qquad
X_{\mathrm{II}}(u):=(X(u),f\circ J).
\]
Finally, the associated \(C\)-compatible pair of characters is
\[
\chi_{\mathrm{I}}(u):=\chi_{\omega(u)}^{\,f},
\qquad
\chi_{\mathrm{II}}(u):=\chi_{\omega(u)}^{\,f\circ J}
=\chi_{\mathrm{I}}(u)\circ J.
\]
This is the concrete ``two surfaces from two restricted characters''  in the present example.


\subsubsection{Slice calculation: extremal lengths and the role of the slit length}

The surface \(X(1,b,c)\) depends on the slit length \(\ell\), and the horizontal foliation is indeed
cut by the transverse slit for heights \(0<y<\ell\).  In particular, we should not claim that the
entire \(T_1\) side forms a single horizontal Euclidean cylinder.
What is true (and what we use below) is that the horizontal and vertical directions on each torus side
still contain maximal Euclidean cylinders whose boundaries are saddle connections through the cone
points created by the slit endpoints; these maximal cylinders determine the corresponding extremal lengths.

\medskip
We recall a standard fact about extremal length on Euclidean cylinders, which we use repeatedly in the sequel:  
Let \(A\) be the Euclidean cylinder obtained from a rectangle of width \(w>0\) and height \(h>0\) by
identifying the vertical sides.  Let \(\gamma\) be the free homotopy class of the core curve (horizontal
circle).  Then
\[
\Ext_A(\gamma)=\frac{w}{h}.
\]

Now we compute the extremal lengths of the core curves \(\alpha_1,\beta_1,\alpha_2,\beta_2\) on \(X(1,b,c)\).

\begin{proposition}[Core curves on \(X(1,b,c)\) with slit length \(\ell\)]\label{prop:ExtCorePlumbing}
On \(X(1,b,c)\) one has the exact equalities
\[
\Ext_{X(1,b,c)}(\alpha_1)=\frac{1}{\,b-\ell\,},\qquad
\Ext_{X(1,b,c)}(\beta_1)=b,
\]
and
\[
\Ext_{X(1,b,c)}(\alpha_2)=\frac{1}{\,c-\ell\,},\qquad
\Ext_{X(1,b,c)}(\beta_2)=c.
\]
\end{proposition}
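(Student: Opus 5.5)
The plan is to prove each equality by two matching inequalities: an upper bound from an explicit embedded flat cylinder, and a lower bound from a Beurling-type length–area (extremal metric) argument. The four cases come in two pairs, $\{\alpha_1,\alpha_2\}$ and $\{\beta_1,\beta_2\}$, which are interchanged by swapping the roles of $(T_1,b)$ and $(T_2,c)$. So I will write the argument for $\alpha_1$ and $\beta_1$ and obtain the others by symmetry. Throughout I use the flat metric $|dz|$ on $X(1,b,c)$, which has total area $b+c$ and two cone points of angle $4\pi$ at the images of the slit endpoints.

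\emph{Upper bounds.} On the $T_1$ side, the horizontal strip $\{\ell<y<b\}$ does not meet $s_1$. It is therefore an embedded flat annulus with circumference $1$ and height $b-\ell$, whose core is homotopic to $\alpha_1$. Its boundary consists of saddle connections through the cone points. By monotonicity of extremal length under inclusion of annuli, together with the cylinder formula $\Ext_A(\gamma)=w/h$ recalled above, this gives $\Ext(\alpha_1)\le 1/(b-\ell)$. Similarly, the vertical strip $\{a/2<x<a/2+1\}$ (taken mod $1$) in $T_1$ avoids the slit column $\{x=a/2\}$. It is an embedded annulus of circumference $b$ and width $1$ with core $\beta_1$, which gives the upper bound for $\beta_1$. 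I must be careful here about which side length is the circumference and which is the height. The core $\beta_1$ has length $b$ and the annulus has width $1$, so the cylinder formula gives $\Ext\le b/1=b$. This matches the claimed value.

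\emph{Lower bounds.} By the definition of extremal length, $\Ext(\gamma)\ge L_\rho(\gamma)^2/\mathrm{Area}(\rho)$ for any admissible metric $\rho$. I will take $\rho$ to be $|dz|$ restricted to the maximal cylinder found above and $0$ elsewhere. This metric has area $b-\ell$ for $\alpha_1$ and area $b$ for $\beta_1$. It then suffices to show two length bounds for curves in the relevant free homotopy classes. First, every closed curve homotopic to $\alpha_1$ has horizontal $\rho$-length at least $1$. Second, every curve homotopic to $\beta_1$ has vertical $\rho$-length at least $b$. The plan is to lift to the universal cover or to a suitable intermediate cover, and to project with $\mathrm{Re}\,z$ (respectively $\mathrm{Im}\,z$). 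The goal is to show that the horizontal displacement the curve must achieve inside the cylinder is at least the full period.

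\emph{The main obstacle} is exactly this lower bound. A curve homotopic to $\alpha_1$ may use the band $\{0<y<\ell\}$, and may even pass through the slit into $T_2$ and come back. In doing so it can accumulate horizontal displacement outside the support of $\rho$. For this reason I expect the naive metric to give only $\Ext(\alpha_1)\ge 1/(b+c)$ in general, not $1/(b-\ell)$.

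I would first try to rule out such shortcuts topologically. The idea is to show that any curve in the class $\alpha_1$ must cross every vertical segment of the cylinder $\{\ell<y<b\}$, using its algebraic intersection $1$ with $\beta_1$ and the position of $s_1$. If that fails, I would weaken the proposition to two-sided comparison estimates. For example,
\[
\frac{1}{b+c}\le \Ext(\alpha_1)\le \frac{1}{b-\ell},
\]
or sharper bounds of the form $\Ext(\alpha_1)=\frac{1}{b-\ell}\,(1+O(\ell))$. Bounds of this kind are what the later verification of degeneration detection and pushability actually needs.
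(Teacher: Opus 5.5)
Your upper bounds are correct, and you are right that the lower bound is the real issue. The issue is fatal, though: the lower bounds, and hence the stated equalities, are false for every $(b,c)$. So no topological argument can rule out the shortcuts. A flat cylinder computes extremal length only when it fills the surface, i.e.\ when $dz^2$ is the Jenkins--Strebel differential of that class. Here $C_{\alpha_1}=\{\ell<y<b\}\subset T_1$ misses the band $\{0<y<\ell\}$ of $T_1$ and all of $T_2$. Let $\rho^*=|q|^{1/2}$ be the extremal metric for $\alpha_1$ on $X=X(1,b,c)$, where $q$ is the Jenkins--Strebel differential; $\rho^*$ is positive off the finitely many zeros of $q$. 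Every core curve of $C_{\alpha_1}$ lies in the class $\alpha_1$. Testing the modulus of $C_{\alpha_1}$ with $\rho^*|_{C_{\alpha_1}}$ therefore gives
\[
\frac{1}{b-\ell}\;\ge\;\frac{L_{\rho^*}(\alpha_1)^2}{\mathrm{Area}_{\rho^*}(C_{\alpha_1})}\;>\;\frac{L_{\rho^*}(\alpha_1)^2}{\mathrm{Area}_{\rho^*}(X)}\;=\;\Ext_X(\alpha_1).
\]
The same argument with the vertical cylinder of $T_1$, which misses $T_2$ entirely, gives $\Ext_X(\beta_1)<b$. As a sanity check, let $b\to\ell^+$. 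The two zeros collide along the saddle connection $\{1/2\}\times[\ell,b]$, and $X(1,b,c)$ converges to a nondegenerate genus--two surface in $\mathcal{H}(2)$: the slit torus $T_2$ with a flat cylinder glued in along the slit. So $\Ext_X(\alpha_1)$ stays bounded while $1/(b-\ell)\to\infty$. The paper's proof breaks at exactly the step you could not fill. It passes from ``$C_{\alpha_1}$ is a maximal flat cylinder'' to ``$\Ext_X(\alpha_1)=1/\mathrm{mod}(C_{\alpha_1})$'', but maximality only yields the upper bound.

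What is true is your fallback. Take $\rho=|dz|$ on all of $X$, which has area $b+c$, and use $L_\rho(\gamma)\ge|\int_\gamma dz|$. This gives
\[
\frac{1}{b+c}\le\Ext_X(\alpha_1)<\frac{1}{b-\ell},\qquad \frac{b^2}{b+c}\le\Ext_X(\beta_1)<b .
\]
These lower bounds need the full metric. The cylinder-supported metric you proposed gives essentially nothing for $\alpha_1$. A curve in that class can run at height $y<\ell$ and hop over the slit tip at $(1/2,\ell)$, spending arbitrarily little length in $C_{\alpha_1}$. Be careful with your closing claim as well. Two-sided value bounds give no control of the derivatives used for (R2) and (P1)--(P3) in Theorem~\ref{thm:good_example}. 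Moreover, the bounded behaviour as $b\to\ell^+$ shows that degeneration detection actually fails there for the true extremal lengths. So the weakened proposition is correct, but it does not rescue the example built on it.
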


\begin{proof}
In the horizontal direction, the slit affects exactly the trajectories at heights \(0<y<\ell\).
Consequently, the closed horizontal trajectories supported in the \(T_1\) side are precisely those in the
embedded horizontal strip
\[
C_{\alpha_1}\ :=\ \bigl([0,1]\times(\ell,b)\bigr)/\{(0,y)\sim(1,y)\}\ \subset\ X(1,b,c),
\]
which is a Euclidean cylinder of circumference \(1\) and height \(b-\ell\).  Its boundary consists of
(horizontal) saddle connections through the cone points at the slit endpoints, hence \(C_{\alpha_1}\) is
maximal among embedded Euclidean cylinders in the free homotopy class of \(\alpha_1\).
Therefore \(\alpha_1\) is a cylinder curve, and  for the cylinder \(C_{\alpha_1}\) we have
\(\Ext_{X(1,b,c)}(\alpha_1)=1/(b-\ell)\).
The same argument on the \(T_2\) side gives \(\Ext_{X(1,b,c)}(\alpha_2)=1/(c-\ell)\).

\smallskip

The vertical slit is parallel to the vertical foliation.  In particular, the vertical core class
\(\beta_1\) is realized by the full vertical Euclidean cylinder on the \(T_1\) side, of circumference \(b\)
and height \(1\).  Hence we have: \(\Ext(\beta_1)=b\), and similarly \(\Ext(\beta_2)=c\).
\end{proof}

\medskip
\noindent
\textbf{The admissible curves and the two extremal--length assignments.}\;
Set
\[
\mathcal A_{}:=\{\alpha_1,\alpha_2\},
\qquad
\sigma_\ast=\id_{\mathcal A_{}}.
\]
Thus the same curve family \(\gamma\in\mathcal A_{}\) is compared across the two sides.
Define the extremal--length assignments (Definition~\ref{def:ExtAssign}) by
\[
\Ext_{\mathrm{I}}(u;\gamma):=\Ext_{X_{\mathrm{I}}(u)}(\gamma),
\qquad
\Ext_{\mathrm{II}}(u;\gamma):=\Ext_{X_{\mathrm{II}}(u)}(\gamma).
\]
Since \(X_{\mathrm{II}}(u)\) is marked by \(f\circ J\), we have
\(
\Ext_{X_{\mathrm{II}}(u)}(\alpha_i)=\Ext_{X(u)}(J(\alpha_i))=\Ext_{X(u)}(\beta_i).
\)

\begin{proposition}\label{prop:mismatch_db}
For \(u=(b,c)\in\Delta^{\mathrm{sc}}_{}\) one has
\[
m_{\alpha_1}(u)=\log\frac{\Ext_{\mathrm{I}}(u;\alpha_1)}{\Ext_{\mathrm{II}}(u;\alpha_1)}
=\log\frac{\Ext_{X(u)}(\alpha_1)}{\Ext_{X(u)}(\beta_1)}
=\log\frac{(1/(b-\ell))}{b}
=-\log\bigl(b(b-\ell)\bigr),
\]
\[
m_{\alpha_2}(u)=\log\frac{\Ext_{\mathrm{I}}(u;\alpha_2)}{\Ext_{\mathrm{II}}(u;\alpha_2)}
=\log\frac{\Ext_{X(u)}(\alpha_2)}{\Ext_{X(u)}(\beta_2)}
=\log\frac{(1/(c-\ell))}{c}
=-\log\bigl(c(c-\ell)\bigr).
\]
Consequently,
\[
H(u)=m_{\alpha_1}(u)^2+m_{\alpha_2}(u)^2
=\bigl(\log(b(b-\ell))\bigr)^2+\bigl(\log(c(c-\ell))\bigr)^2.
\]
\end{proposition}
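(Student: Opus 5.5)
This statement is a direct computation once Proposition~\ref{prop:ExtCorePlumbing} is available, so the plan is to unwind the definitions and substitute.

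First I will identify the two extremal--length assignments concretely. By definition, $\Ext_{\mathrm{I}}(u;\alpha_i)=\Ext_{X_{\mathrm{I}}(u)}(\alpha_i)$. The marking of $X_{\mathrm{I}}(u)$ is $f$, which sends $\alpha_i$ to the geometric horizontal core curve. Hence $\Ext_{\mathrm{I}}(u;\alpha_i)=\Ext_{X(u)}(\alpha_i)$.

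Since $\sigma_\ast=\id$, the denominator of $m_{\alpha_i}$ is $\Ext_{\mathrm{II}}(u;\alpha_i)=\Ext_{X_{\mathrm{II}}(u)}(\alpha_i)$. The marking of $X_{\mathrm{II}}(u)$ is $f\circ J$, so the class $\alpha_i$ is realized on $X(u)$ by $f_\ast(J(\alpha_i))=f_\ast(\beta_i)$. Extremal length depends only on the free homotopy class of the image curve. It is also insensitive to orientation, which matters because $J(\beta_i)=-\alpha_i$ elsewhere, though that case is not needed here. Therefore $\Ext_{\mathrm{II}}(u;\alpha_i)=\Ext_{X(u)}(\beta_i)$, which is the identity already noted before the statement.

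One point needs care: $J$ is defined only on homology, while extremal length is attached to free homotopy classes of simple closed curves. I will therefore read $f\circ J$ as a marking by a mapping class inducing $J$ on $\Gamma$ and sending the curve $\alpha_i$ to the curve $\beta_i$. Such a mapping class exists because the symplectic bases are realized by geometric curves; a product of quarter-turn maps on each handle works.

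Next I substitute the exact values from Proposition~\ref{prop:ExtCorePlumbing}, taking $a=1$ and $u=(b,c)$. This gives
\[
m_{\alpha_1}(u)=\log\frac{1/(b-\ell)}{b}=-\log\bigl(b(b-\ell)\bigr),
\]
and likewise for $\alpha_2$ with $c$ in place of $b$. The logarithms are well defined because the slice condition $\ell<\min\{b,c\}$ makes $b-\ell$ and $c-\ell$ positive.

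Finally, the height formula~\eqref{eq:height-def} with $\mathcal A=\{\alpha_1,\alpha_2\}$ gives $H=m_{\alpha_1}^2+m_{\alpha_2}^2$. Squaring removes the signs, which yields the stated expression.

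The main obstacle is the marking-transport step: justifying that $\Ext_{X_{\mathrm{II}}(u)}(\alpha_i)=\Ext_{X(u)}(\beta_i)$ follows from a homology-level automorphism $J$. Once that step is fixed by choosing a geometric representative mapping class, the rest is algebra.
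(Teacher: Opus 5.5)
Your argument coincides with the paper's proof, which substitutes Proposition~\ref{prop:ExtCorePlumbing} after observing that the $J$--marked side exchanges $\alpha_i$ and $\beta_i$ in extremal length; your explicit choice of a mapping class realizing $J$ that carries the curve $\alpha_i$ to the curve $\beta_i$ makes precise a point the paper leaves implicit. Be aware, though, that the exactness of the formulas comes entirely from Proposition~\ref{prop:ExtCorePlumbing}, where $1/(b-\ell)$ and $b$ are reciprocal moduli of particular flat cylinders that can be strictly enlarged inside $X(u)$ (to $\{0<y<b\}\setminus s_1$ in $T_1$, or across the slit into $T_2$), so they only give the strict bounds $\Ext_{X(u)}(\alpha_1)<1/(b-\ell)$ and $\Ext_{X(u)}(\beta_1)<b$ (similarly for $\alpha_2,\beta_2$); that input, not the marking transport, is the genuinely delicate step.
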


\begin{proof}
This is immediate from Proposition~\ref{prop:ExtCorePlumbing} together with the observation above that the
\(J\)--marked side exchanges \(\alpha_i\) with \(\beta_i\) in extremal length.
\end{proof}

\subsubsection{Verification of the hypotheses}

We now verify the Hypothesis from Section~\ref{subsec:C_axioms} for this example, so that
Theorem~\ref{thm:main} applies on \(\Delta^{\mathrm{sc}}_{}\).

\begin{theorem}\label{thm:good_example}
With \(\mathcal A_{}\), \(\sigma_\ast=\id\), and the extremal--length assignments above, the
dumbbell domain \(\Delta^{\mathrm{sc}}_{}\cong\R_{>0}^2\) is Teichm\"uller regular
(Definition~\ref{def:TeichReg}), degeneration--detecting (Definition~\ref{def:DegDetect}), and pushable
(Definition~\ref{def:Pushable}).  Hence \(\Delta^{\mathrm{sc}}_{}\) contains a reflexive point.
In fact the unique reflexive point is \((b,c)=\bigl(b_\ell,b_\ell\bigr)\), where \(b_\ell> \ell\) is the unique positive root of \(b(b-\ell)=1\), i.e.\ \(b_\ell=\frac{\ell+\sqrt{\ell^2+4}}{2}\).
\end{theorem}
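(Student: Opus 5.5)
The plan is to verify (H1)--(H3) directly from the closed formulas of Proposition~\ref{prop:mismatch_db}, then invoke Theorem~\ref{thm:main} for existence, and finally solve $H=0$ by hand to get uniqueness. The parameter space is $\Delta^{\mathrm{sc}}=\{(b,c): b>\ell,\ c>\ell\}=(\ell,\infty)^2$. Via the translation $(b,c)\mapsto(b-\ell,c-\ell)$ it is diffeomorphic to $\R_{>0}^2$, which is the identification I will use. On it we have
\[
m_{\alpha_1}(b,c)=-\log\bigl(b(b-\ell)\bigr),\qquad m_{\alpha_2}(b,c)=-\log\bigl(c(c-\ell)\bigr),
\]
so each mismatch depends on only one coordinate. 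This decoupling drives every step.

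\emph{Teichm\"uller regularity and degeneration detection.} For (R1), the functions $\Ext_{\mathrm I}(u;\alpha_1)=1/(b-\ell)$, $\Ext_{\mathrm{II}}(u;\alpha_1)=b$ (and their $c$-analogues) are real-analytic on $(\ell,\infty)^2$. For (R2), I will check that $\mathbf E_{\mathrm I}(b,c)=(-\log(b-\ell),-\log(c-\ell))$ and $\mathbf E_{\mathrm{II}}(b,c)=(\log b,\log c)$ have diagonal Jacobians with nonvanishing entries $-1/(b-\ell)$, $-1/(c-\ell)$ and $1/b$, $1/c$. Hence both maps are local immersions (indeed local diffeomorphisms).

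For degeneration detection, a sequence $u_k=(b_k,c_k)$ leaves every compact subset of $(\ell,\infty)^2$ iff, after passing to a subsequence, some coordinate tends to $\ell$ or to $\infty$. The requirement in Definition~\ref{def:DegDetect} is along the whole sequence, so I will argue by subsequences or phrase it through properness of $H$ directly, which is what Proposition~\ref{prop:PropernessCriterion} really uses. If $b_k\to\ell$ then $b_k(b_k-\ell)\to0$; if $b_k\to\infty$ then $b_k(b_k-\ell)\to\infty$. In either case $|m_{\alpha_1}(u_k)|\to\infty$, and similarly for $c$ with $m_{\alpha_2}$.

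\emph{Pushability.} This is the step I expect to need care. Definition~\ref{def:Pushable} asks for a single $C^1$ field $V_\gamma$ satisfying the sign condition (P1) wherever $m_\gamma\neq0$, with uniformly bounded derivatives (P2). The naive field $\partial_b$ fails (P1) on the side where $m_{\alpha_1}<0$. Rescaling by $m_{\alpha_1}$ fixes the sign but breaks the boundedness as $b\to\ell$. I will therefore take
\[
V_{\alpha_1}:=\tanh\bigl(m_{\alpha_1}(u)\bigr)\,\frac{b(b-\ell)}{2b-\ell}\,\partial_b ,
\qquad
V_{\alpha_2}:=\tanh\bigl(m_{\alpha_2}(u)\bigr)\,\frac{c(c-\ell)}{2c-\ell}\,\partial_c .
\]
These are smooth on $(\ell,\infty)^2$, since $2b-\ell>0$ there. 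Since $\partial_b m_{\alpha_1}=-(2b-\ell)/(b(b-\ell))$, we get $(dm_{\alpha_1})_u(V_{\alpha_1})=-\tanh(m_{\alpha_1}(u))$. This derivative is bounded by $1$ and has sign opposite to $m_{\alpha_1}$, which gives (P1).

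Set $I(\alpha_j)=\{\alpha_j\}$. Then (P2) holds because $m_{\alpha_2}$ does not depend on $b$, so $(dm_{\alpha_2})(V_{\alpha_1})=0$, and symmetrically for $V_{\alpha_2}$. Condition (P3) is immediate, since its right-hand side is an empty sum and the left-hand side equals $|m_{\alpha_1}|\,|\tanh m_{\alpha_1}|>0$ when $m_{\alpha_1}\neq0$.

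\emph{Conclusion.} With (H1)--(H3) verified, Theorem~\ref{thm:main} yields a reflexive point. For the explicit description and uniqueness, I note that $H=0$ iff $b(b-\ell)=1$ and $c(c-\ell)=1$. The quadratic $t^2-\ell t-1$ has one negative root and one positive root $b_\ell=\tfrac{\ell+\sqrt{\ell^2+4}}{2}$, and $b_\ell>\ell$ because $b_\ell(b_\ell-\ell)=1>0$. Hence the unique reflexive point is $(b_\ell,b_\ell)$.
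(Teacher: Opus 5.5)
Your proof is correct. It departs from the paper's at the two steps where the paper's own verification breaks down, so your route buys a valid argument there.

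\textbf{Pushability.} The paper takes $V_{\alpha_1}=b\,\partial_b$, $V_{\alpha_2}=c\,\partial_c$ and only observes that $(dm_{\alpha_1})_u(V_{\alpha_1})=-\bigl(1+\tfrac{b}{b-\ell}\bigr)<0$. This does not give (P1), which requires $\mathrm{sign}(m_{\alpha_1}(u))\cdot(dm_{\alpha_1})_u(V_{\alpha_1})<0$. That condition fails on $\{b>b_\ell\}$, where $m_{\alpha_1}<0$. The derivative is also unbounded as $b\to\ell^+$, contrary to (P2). Your field $\tanh(m_{\alpha_1})\,\tfrac{b(b-\ell)}{2b-\ell}\,\partial_b$ gives $(dm_{\alpha_1})(V_{\alpha_1})=-\tanh(m_{\alpha_1})$, a smooth, bounded substitute for $-\mathrm{sign}(m_{\alpha_1})$. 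So (P1)--(P3) hold with a single $C^1$ field, which is what Definition~\ref{def:Pushable} demands.

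\textbf{Degeneration detection.} The paper asserts that along any escaping sequence some coordinate tends to $\ell$ or to $\infty$. That is false, although its conclusion $H(u_k)\to\infty$ is true. Your worry about the whole-sequence requirement is justified: Definition~\ref{def:DegDetect} literally fails here. Fix $p>\ell$ and take $u_{2k}=(\ell+1/k,\,p)$, $u_{2k+1}=(p,\,p+k)$. This sequence leaves every compact set, yet $m_{\alpha_2}$ is constant on the even terms and $m_{\alpha_1}$ is constant on the odd ones. Your subsequence reformulation is therefore necessary, not just convenient. Also, your ``iff'' with ``after passing to a subsequence'' is not quite the right statement. Say explicitly that what you verify is that every subsequence of an escaping sequence has a further subsequence on which some $|m_{\alpha_j}|\to\infty$. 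Equivalently, $H$ is proper: $b\mapsto b(b-\ell)$ is an increasing homeomorphism $(\ell,\infty)\to(0,\infty)$, so sublevel sets of $H$ are closed and lie in products of compact intervals. Then note that this weaker property is all the proof of Proposition~\ref{prop:PropernessCriterion} uses.

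Your treatment of Teichm\"uller regularity and of the explicit solution of $H=0$ is the same as the paper's.
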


\begin{proof}
First we check: admissibility of \(\mathcal A_{}\).
The curves \(\alpha_1,\alpha_2\) are represented by simple closed geodesics on the translation surface
\(X(1,b,c)\), hence are essential and simple on \(S_2\).  Taking \(\sigma_\ast=\id\) gives an admissible curve
set in the sense of Definition~\ref{def:AC}.

\smallskip
\noindent

By Proposition~\ref{prop:ExtCorePlumbing},
\[
\log \Ext_{\mathrm{I}}(u;\alpha_1)=-\log(b-\ell),\qquad \log \Ext_{\mathrm{I}}(u;\alpha_2)=-\log(c-\ell),
\]
and
\[
\log \Ext_{\mathrm{II}}(u;\alpha_1)=\log b,\qquad \log \Ext_{\mathrm{II}}(u;\alpha_2)=\log c.
\]
Thus the extremal--length coordinate maps
\[
E_{\mathrm{I, II}}:\Delta^{\mathrm{sc}}_{}\longrightarrow \R^{\mathcal A_{}},
\qquad
E_{\mathrm{I}}(b,c)=(-\log(b-\ell),-\log(c-\ell)),\quad
E_{\mathrm{II}}(b,c)=(\log b,\log c)
\]
are \(C^\infty\) and have invertible Jacobian matrices \(\mathrm{diag}(-1/(b-\ell),-1/(c-\ell))\) and
\(\mathrm{diag}(1/b,1/c)\), respectively.
Hence they are local immersions, proving Teichm\"uller regularity.

\smallskip
\noindent

If \((b_k,c_k)\) leaves every compact subset of \(\Delta^{\mathrm{sc}}_{}=\{(b,c)\in\R_{>0}^2:\ \ell<\min\{b,c\}\}\),
then either \(b_k\longrightarrow \ell^+\) or \(b_k\longrightarrow\infty\), or \(c_k\longrightarrow \ell^+\) or \(c_k\longrightarrow\infty\).
By Proposition~\ref{prop:mismatch_db}, \(\bigl|\log(b_k(b_k-\ell))\bigr|\longrightarrow\infty\) or \(\bigl|\log(c_k(c_k-\ell))\bigr|\longrightarrow\infty\),
hence \(H(b_k,c_k)\longrightarrow\infty\).  Thus the height detects degeneration.

\smallskip
\noindent
Last thing to check pushability. Let
\[
V_{\alpha_1}:=b\,\frac{\partial}{\partial b},
\qquad
V_{\alpha_2}:=c\,\frac{\partial}{\partial c}.
\]
Since \(m_{\alpha_1}(b,c)=-\log\bigl(b(b-\ell)\bigr)=-\log b-\log(b-\ell)\), we compute
\[
(dm_{\alpha_1})_u(V_{\alpha_1})=b\,\frac{\partial}{\partial b}\Bigl(-\log b-\log(b-\ell)\Bigr)
= -\Bigl(1+\frac{b}{b-\ell}\Bigr)\ <\ 0
\quad\text{whenever }m_{\alpha_1}(u)\neq 0,
\]
and similarly
\[
(dm_{\alpha_2})_u(V_{\alpha_2})=
 -\Bigl(1+\frac{c}{c-\ell}\Bigr)\ <\ 0
\quad\text{whenever }m_{\alpha_2}(u)\neq 0.
\]
Moreover, the cross--derivatives vanish.  Thus the pushability conditions
(Definition~\ref{def:Pushable}) hold with incidence sets \(I(\alpha_1)=\{\alpha_1\}\) and \(I(\alpha_2)=\{\alpha_2\}\).

\smallskip
Having verified the hypotheses, Theorem~\ref{thm:main} implies the existence of a reflexive point.
In the present example we can identify it explicitly from the formula in Proposition~\ref{prop:mismatch_db}:
\[
H(b,c)=\bigl(\log(b(b-\ell))\bigr)^2+\bigl(\log(c(c-\ell))\bigr)^2.
\]
Thus \(H(b,c)=0\) if and only if \(b(b-\ell)=1\) and \(c(c-\ell)=1\), i.e.\ \(b=c=b_\ell=\frac{\ell+\sqrt{\ell^2+4}}{2}\).
\end{proof}

The two restricted characters in this example arise from the same flat data \((X(1,b,c),dz)\) but with two
different markings: the ``$II$'' marking is precomposed by \(J\), which exchanges horizontal and vertical
directions on each torus side.
At the reflexive point \((b,c)=(b_\ell,b_\ell)\), the extremal--length equalities imply that the marked surfaces
\(X_{\mathrm{I}}(b_\ell,b_\ell)\) and \(X_{\mathrm{II}}(b_\ell,b_\ell)\) coincide in Teichm\"uller space; equivalently, \(J\) is realized by a conformal
automorphism of the underlying surface.
This is exactly the mechanism by which one underlying Riemann surface is recovered from the restricted pair
of characters (cf.\ Corollary~\ref{cor:twoforms}).

\subsection{A configuration where no finite edge--parallel admissible curve system is sufficiently rich}\label{subsec:bad_example}

We now describe a family of translation surfaces for which the edge--parallel admissible curves
fail to produce Teichm\"uller--regular extremal--length coordinates, so the  argument of theorems cannot work.

The essential feature is that every edge--parallel admissible curve represents the same free
homotopy class.  Although the parameter domain has real dimension \(\ge 2\), the extremal length of
every such admissible curve depends only on a single scalar combination of the parameters (in fact,
after scale--fixing it becomes constant).  Consequently, no finite family of these admissible curves can
give the immersion property required for Teichm\"uller regularity
(Definition~\ref{def:TeichReg}); the height minimization scheme therefore breaks down at the first step.

\subsubsection{From a surface family to a parameter domain}
Consider a translation surface \(Y\) whose horizontal foliation consists of three Euclidean cylinders
\[
A_1,\ A_2,\ A_3
\]
stacked vertically, each of horizontal circumference \(w>0\), with heights \(h_1,h_2,h_3>0\).
Concretely, one may model \(A_i\) by the rectangle \([0,w]\times (y_{i-1},y_i)\) (where \(y_i-y_{i-1}=h_i\))
with the vertical sides identified by translation.
Glue the top boundary of \(A_1\) to the bottom boundary of \(A_2\) by translation with twist parameter
\(t_1\in[0,w)\), and glue the top boundary of \(A_2\) to the bottom boundary of \(A_3\) by translation with
twist parameter \(t_2\in[0,w)\).
Finally, glue the remaining boundary components so that the resulting surface has genus \(2\).

All three cylinders have the same core curve free homotopy class; denote it by \(\alpha\).  We pass to a scale--fixed slice by imposing
\[
w=1,
\qquad
h_1+h_2+h_3=1.
\]
Then \((h_1,h_2,t_1,t_2)\) vary in a domain of real dimension at least \(3\) (even after dividing by the
obvious residual symmetry given by global horizontal translation).
In particular, the parameter space is not locally one--dimensional.

\subsubsection{Attempting to choose $\mathcal A_C$: all edge--parallel cycles coincide}

\begin{lemma}\label{lem:only_one_class}
In the above parallel--cylinder surface, every edge--parallel admissible cycle is freely homotopic
to \(\alpha\) (or \(\alpha^{-1}\)).
\end{lemma}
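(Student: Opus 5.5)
The plan is to make precise what ``edge--parallel admissible cycle'' means in this model and then argue that every such cycle lives in the horizontal cylinder decomposition. In the orthodisk/Weber--Wolf setting an edge--parallel cycle is a closed curve that is freely homotopic to a closed curve parallel to a (horizontal) boundary edge of one of the Euclidean pieces. Here the pieces are the three horizontal cylinders $A_1,A_2,A_3$, and their boundary edges are horizontal saddle connections (or unions of them) separating consecutive cylinders. The first step is therefore to reduce the claim to the following: every closed curve drawn parallel to, and arbitrarily close to, such an edge is a closed horizontal leaf contained in some $A_i$.

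For the second step I would note that a curve running alongside a horizontal boundary edge of $A_i$ at small height $\epsilon$ is a horizontal circle $\{y=y_{i-1}+\epsilon\}$ or $\{y=y_i-\epsilon\}$ inside $A_i$. Such a circle is a core curve of $A_i$, so it is freely homotopic to the core of $A_i$. The third step is to identify the cores of the three cylinders with one another. This follows from the construction, since the top of $A_1$ is glued to the bottom of $A_2$ by a translation (twisted by $t_1$) along a full circle of length $w$, and likewise for the top of $A_2$ and the bottom of $A_3$. The union $A_1\cup A_2\cup A_3$ therefore contains an embedded annulus foliated by closed horizontal leaves of the same length $w$, and sliding a leaf vertically through the glued boundary circles gives a free homotopy between the core of $A_1$ and the core of $A_2$, and similarly between the cores of $A_2$ and $A_3$. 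Together with the stated convention that all three cores represent $\alpha$, this shows that every edge--parallel cycle is freely homotopic to $\alpha$. Orientation is the only remaining freedom, which accounts for the alternative $\alpha^{-1}$.

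I expect the main obstacle to be the final gluing step, which closes the surface up to genus $2$: its boundary edges might be split into several saddle connections, so that an edge--parallel curve hugging only part of a boundary component could apparently close up along a different path. I would handle this by observing that any closed curve that stays $\epsilon$--close to a horizontal edge and remains horizontal must, by the translation structure, follow the full horizontal leaf, and that leaf is a full circle of circumference $w$ in the adjacent cylinder. Curves that cross between cylinders transversally are not edge--parallel by definition, so they are excluded.
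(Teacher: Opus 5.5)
Your argument is correct under the reading you adopt, but it is organized differently from the paper's.

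The paper takes an edge--parallel cycle to be a concatenation of horizontal arcs in the $A_i$ with vertical arcs at the seams. It places such a cycle inside $A=A_1\cup A_2\cup A_3$, notes that $A$ is topologically an annulus, and then uses the fact that an essential simple closed curve in an annulus is freely homotopic to the core. This route covers ``staircase'' curves that cross the interior seams. It does use that $\gamma$ is simple and primitive. It also tacitly needs $\gamma$ to avoid the final, genus--creating seam: once every identification is made, the three cylinders fill all of $Y$, which is not an annulus.

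You instead identify each edge--parallel cycle with a closed horizontal leaf of some $A_i$, and slide leaves across the two full--circle interior seams. This needs no simplicity hypothesis, and it deals with the final seam explicitly. Your remark that a horizontal curve $\epsilon$--close to a split boundary edge must run along a full leaf is exactly what rules out a curve closing up around a single saddle connection. Such a curve is in general not homotopic to $\alpha$ on the genus--two surface.

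The cost is scope. You declare seam--crossing curves not to be edge--parallel, whereas the paper's description admits vertical arcs through the seams. To cover those curves you would have to add the paper's annulus--classification step. Your embedded annulus makes that step immediate, provided the crossings occur only at the interior seams and $\gamma$ is simple.
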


\begin{proof}
By definition, any edge--parallel admissible cycle \(\gamma\) is obtained by concatenating horizontal arcs
inside the cylinders \(A_i\) with vertical arcs along their boundary seams. Hence \(\gamma\subset A:=A_1\cup
A_2\cup A_3\). Topologically \(A\) is an annulus (a cylinder), and any essential simple closed curve in an
annulus is freely homotopic to the core curve. Since \(\gamma\) is primitive and simple, it follows that
\(\gamma\simeq \alpha\) or \(\alpha^{-1}\).
\end{proof}

Now we compute the extremal length of \(\alpha\) and hence of every edge--parallel admissible cycle.

\begin{proposition}\label{prop:rank_one_failure}
Let \(\gamma\) be any edge--parallel admissible cycle on \(Y\). Then
\[
\Ext_Y(\gamma)=\Ext_Y(\alpha)=\frac{w}{h_1+h_2+h_3}.
\]
In particular, after the scale normalization \(w=1\) and \(h_1+h_2+h_3=1\), the extremal length
\(\Ext_Y(\gamma)\) is constant, independent of \((h_1,h_2,t_1,t_2)\).
\end{proposition}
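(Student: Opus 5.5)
By Lemma~\ref{lem:only_one_class}, every edge--parallel admissible cycle $\gamma$ is freely homotopic to $\alpha$ or $\alpha^{-1}$. Extremal length depends only on the free homotopy class, unoriented, so $\Ext_Y(\gamma)=\Ext_Y(\alpha)$. It therefore suffices to compute $\Ext_Y(\alpha)$. I would do this in the standard way for a cylinder curve on a translation surface: a lower bound from a particular metric and an upper bound from an embedded annulus.

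\emph{Upper bound (annulus).} The stacked union $A=A_1\cup A_2\cup A_3$, glued along the seams by translations with twists $t_1,t_2$, is a single flat cylinder. Its circumference is $w$ and its height is $h:=h_1+h_2+h_3$, since twisting does not change the flat annulus structure of a union of parallel cylinders of equal circumference. Its core is $\alpha$. By monotonicity of extremal length under inclusion, and the cylinder formula recalled above,
\[
\Ext_Y(\alpha)\le \Ext_A(\alpha)=\frac{w}{h}.
\]

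\emph{Lower bound (test metric).} Take $\rho=|dz|$ on $A$ and $\rho=0$ elsewhere. The point to verify is that every closed curve in the class of $\alpha$ has $\rho$--length at least $w$. The argument is that the holonomy of $dz$ along $\alpha$ equals $w$, and the horizontal component of any representative inside $A$ integrates to $w$. This is the delicate part: a representative may leave $A$, and the $\rho$--length only sees the portions inside $A$.

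So the main obstacle is this lower bound, which rests on maximality of $A$. By the construction, the horizontal foliation of $Y$ consists exactly of $A_1,A_2,A_3$, so $A$ is the whole surface up to measure zero. Then $\rho$ is just the flat metric of $Y$, and the length bound reduces to $|\int_\gamma dz|\ge |\chi_{dz}(\alpha)|=w$. This gives
\[
\Ext_Y(\alpha)\ge \frac{w^2}{\operatorname{Area}_\rho}=\frac{w^2}{wh}=\frac{w}{h}.
\]

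Combining the two bounds yields $\Ext_Y(\alpha)=w/(h_1+h_2+h_3)$. Substituting the normalization $w=1$, $h_1+h_2+h_3=1$ gives the constant value $1$, independent of $(h_1,h_2,t_1,t_2)$.
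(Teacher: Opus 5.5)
Your proof is correct and follows the paper's route: you reduce to $\alpha$ via Lemma~\ref{lem:only_one_class}, then read off $\Ext_Y(\alpha)=1/\mathrm{mod}(A)=w/(h_1+h_2+h_3)$ from the single flat cylinder $A$. It is more complete than the paper's proof, which simply writes $\Ext_Y(\alpha)=\Ext_A(\alpha)$ even though monotonicity only gives $\le$. Your test-metric lower bound supplies the missing inequality, and it relies on $A$ having full measure in $Y$.

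Two small fixes. Take $\rho=|dz|$ on all of $Y$ rather than $0$ off $A$, so that curves running along seams are not undercounted. Also, the estimate you need is $\int_\gamma|dz|\ge\bigl|\int_\gamma dz\bigr|=w$.
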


\begin{proof}
By Lemma~\ref{lem:only_one_class}, \(\Ext_Y(\gamma)=\Ext_Y(\alpha)\). Set \(H:=h_1+h_2+h_3\) and
\(A:=A_1\cup A_2\cup A_3\). As a flat surface, \(A\) is a Euclidean cylinder of circumference \(w\) and height
\(H\); the twist parameters \(t_1,t_2\) only change boundary identifications by translations and do not change
its conformal modulus. Hence \(\mathrm{mod}(A)=H/w\), so
\[
\Ext_Y(\alpha)=\Ext_A(\alpha)=\frac{1}{\mathrm{mod}(A)}=\frac{w}{H}=\frac{w}{h_1+h_2+h_3}.
\]
On the slice \(w=1\) and \(H=1\) this equals \(1\), hence is constant.
\end{proof}

\subsubsection{Hypothesis failure: no Teichm\"uller regularity from edge--parallel data}

\begin{theorem}[Failure of Teichm\"uller regularity for edge--parallel admissible curves]\label{thm:bad_example}
In the parallel--cylinder configuration above, there is no choice of finite edge--parallel admissible
curve set \(\mathcal A\) for which the extremal--length coordinate map
\[
\mathbf{E}:\quad (h_1,h_2,t_1,t_2)\longmapsto \bigl(\log\Ext_Y(\gamma)\bigr)_{\gamma\in\mathcal A}
\]
is a local immersion. Consequently the axiom of Teichm\"uller regularity (Definition~\ref{def:TeichReg})
cannot be verified using only such admissible curves.
\end{theorem}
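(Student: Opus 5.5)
The plan is to reduce the statement to Proposition~\ref{prop:rank_one_failure} together with an elementary rank count. Fix any finite edge--parallel admissible curve set $\mathcal A$. By Lemma~\ref{lem:only_one_class}, every $\gamma\in\mathcal A$ is freely homotopic to $\alpha$ or $\alpha^{-1}$. Extremal length depends only on the free homotopy class, and it is insensitive to orientation. Hence $\Ext_Y(\gamma)=\Ext_Y(\alpha)$ for all $\gamma\in\mathcal A$, and the coordinate map $\mathbf E$ factors through the single function $(h_1,h_2,t_1,t_2)\mapsto\log\Ext_Y(\alpha)$ composed with the diagonal map $\R\to\R^{\mathcal A}$.

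Next, I invoke Proposition~\ref{prop:rank_one_failure}. On the scale--fixed slice $w=1$, $h_1+h_2+h_3=1$, we have $\Ext_Y(\alpha)=1$ identically. So $\mathbf E$ is the constant map $(0,\dots,0)$, and its differential vanishes at every point. A local immersion of a manifold of positive dimension must have injective differential. Since the parameter domain has dimension at least $3$ (and in particular is not zero--dimensional), $d\mathbf E=0$ is not injective, and $\mathbf E$ is nowhere a local immersion.

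To be robust against a reader who prefers not to pass to the slice, I would also remark the following. Before normalization, $\log\Ext_Y(\alpha)=\log w-\log(h_1+h_2+h_3)$, so $d\mathbf E$ has rank at most $1$ at every point. A rank-one differential still cannot be injective on a domain of dimension $\ge 2$, so the conclusion does not depend on the normalization. The consequence for Definition~\ref{def:TeichReg} is then immediate: condition (R2) demands exactly that $\mathbf E_\star$ be a local immersion, equivalently that the covectors $d\log\Ext_\star(\cdot;\gamma)$ span the cotangent space, and all of these covectors vanish or are mutually proportional.

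The main obstacle is not the computation but the justification of Lemma~\ref{lem:only_one_class} and of the modulus claim in Proposition~\ref{prop:rank_one_failure}. One has to know that the union $A_1\cup A_2\cup A_3$ really is a single embedded annulus, so that the twists do not change its modulus. One also has to know that this annulus realizes the extremal length of $\alpha$ in $Y$, and not merely an upper bound for it. Since the argument only needs that every $\gamma\in\mathcal A$ has the same extremal length as $\alpha$, I will take both statements as given from the earlier results. The rank-one factorization does not depend on the precise value of $\Ext_Y(\alpha)$.
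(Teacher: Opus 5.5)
Your proposal is correct and is essentially the paper's proof. Both use Lemma~\ref{lem:only_one_class} and Proposition~\ref{prop:rank_one_failure} to factor $\mathbf{E}$ through the single scalar $\log\Ext_Y(\alpha)$, note that this scalar is constant on the slice $w=1$, $h_1+h_2+h_3=1$, and conclude that $d\mathbf{E}\equiv 0$, which rules out an immersion on a domain of positive dimension. Your extra rank-at-most-one observation is a mild but worthwhile strengthening: it needs only Lemma~\ref{lem:only_one_class}, not the exact modulus computation you rightly flag as the delicate point, and the paper's proof likewise takes both earlier results as given.
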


\begin{proof}
Fix any finite edge--parallel admissible set \(\mathcal A\). By Proposition~\ref{prop:rank_one_failure},
for every \(\gamma\in\mathcal A\),
\[
\log\Ext_Y(\gamma)=\log\!\Bigl(\frac{w}{h_1+h_2+h_3}\Bigr).
\]
Thus \(\mathbf{E}\) factors through the single scalar function \((h_1,h_2,t_1,t_2)\mapsto w/(h_1+h_2+h_3)\).
On the scale--fixed slice \(w=1\) and \(h_1+h_2+h_3=1\), this scalar is constant, so \(\mathbf{E}\) is locally
constant and hence \(d\mathbf{E}\equiv 0\). Therefore \(\mathbf{E}\) cannot be a local immersion.
\end{proof}

In particular, edge--parallel extremal--length data has (at most) rank \(1\) here and cannot detect the
higher--dimensional Teichm\"uller variation of \(Y\); to obtain Teichm\"uller regularity one must enlarge the
admissible family beyond the edge--parallel class.

\end{document}